\newtheorem{theorem}{Theorem}
\newtheorem{definition}[theorem]{Definition}
\newtheorem{proposition}[theorem]{Proposition}
\newenvironment{proof}[1][Proof]{\noindent\textbf{#1.} }{\ \rule{0.5em}{0.5em}}
\begin{document}

\title{Biharmonic curves in Finsler spaces}
\author{Nicoleta VOICU \\
"Transilvania" University\\
50, Iuliu Maniu str., Brasov, Romania\\
e-mail: nico.voicu@unitbv.ro}
\date{}
\maketitle

\begin{abstract}
Biharmonic curves are a generalization of geodesics, with applications in
elasticity theory and computer science. The paper proposes a first study of
biharmonic curves in spaces with Finslerian geometry, covering the following
topics: a deduction of their equations, specific properties and existence of
non-geodesic biharmonic curves for some classes of Finsler spaces.
Integration of the biharmonic equation is presented for two concrete Finsler
metrics.
\end{abstract}

\textbf{MSC2010: }53B40, 53C60, 31B30, 58E10

\textbf{Keywords: }Finsler space, bienergy, biharmonic curve

\section{Introduction}

In elasticity theory, as well as in other domains, such as computer graphics
and image processing, a classical problem is that of finding the extremal
curves of the \textit{elastic energy (}or \textit{bending energy})\textit{\
functional}, defined as the integral of the squared curvature $k^{2}$ of a
curve with respect to its arc length $s$. Its extremal curves (called 
\textit{least-energy }or \textit{elastic }curves) are interpreted as the
"smoothest" curves joining some given points or with given boundary
conditions, \cite{Glass}, \cite{Horn}.

The Riemannian generalization of the elastic energy, called the \textit{%
bienergy,} is defined as: 
\begin{equation}
E_{2}(c)=\underset{c}{\int }\kappa ^{2}ds,  \label{bienergy_Riemannian}
\end{equation}%
where $\kappa $ is the geodesic curvature of the curve $c$. Critical points
of $E_{2}$, called \textit{biharmonic curves, }are described by the equation:%
\begin{equation}
\nabla _{\dot{c}}^{3}\dot{c}=R(\dot{c},\nabla _{\dot{c}}\dot{c})\dot{c}.
\label{biharmonic_riemann}
\end{equation}

In Riemannian spaces, biharmonic curves and, more generally, biharmonic
maps, were examined quite in detail, \cite{Caddeo}, \cite{Inoguchi}, \cite%
{Jiang}, \cite{Maeta}, \cite{Montaldo}, \cite{Oniciuc}, \cite{Monterde}. As
a first remark, geodesics of $\nabla $ are always biharmonic (actually, they
are minimum points for the bienergy), but the converse is generally not true.

A non-harmonic biharmonic curve is called \textit{proper biharmonic. }One of
the most discussed related problems is the existence of proper biharmonic
curves. We only mention here several results, \cite{Jiang}, \cite{Montaldo}:

- In the Euclidean space $\mathbb{R}^{n},$ every biharmonic curve is a
segment of a straight line.

- On surfaces with nonpositive Gauss curvature, any biharmonic curve is a
geodesic.

- In Riemannian spaces with nonpositive sectional curvature, every closed
biharmonic curve is a geodesic.

Also, along any (unit speed) biharmonic curve, the geodesic curvature $%
\kappa $ is a constant.

\bigskip 

Passing to Finsler spaces, the interest in biharmonicity is motivated,
beyond purely mathematical reasons, by the potential of Finsler geometry for
models in elasticity theory, \cite{Bucataru}, \cite{Yajima1}. In the present
paper, we find the equations of biharmonic curves in Finsler spaces and
study the Finslerian counterparts of the above mentioned results. 

The paper is organized as follows. Section 2 is an introductory one, devoted
to the necessary notions and results in Finsler geometry. In Section 3, we
define the bienergy of a Finslerian curve and determine the equations of
biharmonic curves. In Section 4, we show that Finslerian biharmonic curves
have constant geodesic curvature. The next section is devoted to the problem
of the existence of proper biharmonic curves in several special cases. For
instance, we show that, to a difference from the Riemannian case, in
dimension greater or equal to three, there exist flat (locally Minkowskian)\
Finsler spaces which admit proper biharmonic curves; also, we determine
proper biharmonic curves for two concrete examples of Finsler metrics. In
Section 6, we present a brief discussion of the obtained results.

\section{Finsler structures}

In this section, we present known notions and results in Finsler geometry to
be used in the following.\ Consider a $\mathcal{C}^{\infty }$-smooth, $n$%
-dimensional differentiable manifold $M$ and denote by $%
(x,y):=(x^{i},y^{i})_{i=\overline{1,n}},$ the local coordinates on its
tangent bundle $(TM,\pi ,M).$ By $_{,i}$ and $_{\cdot i}$ we will mean
partial differentiation with respect $x^{i}$ and $y^{i}$ respectively; by $%
\Gamma (E)$ we will mean the module of the sections of some bundle $E$ over $%
M.$

A \textit{Finsler structure, \cite{Shen}, \cite{Bucataru},} on the manifold $%
M$ is a function $F:TM\rightarrow \mathbb{R}$ with the properties:

1) $F(x,y)\ $is $\mathcal{C}^{\infty }$-smooth for $y\not=0$ and continuous
at $y=0.$

2)$\ F(x,\lambda y)=\lambda F(x,y),$ $\forall \lambda >0$.

3)\ The \textit{Finslerian metric tensor}:%
\begin{equation}
g_{ij}(x,y):=\dfrac{1}{2}(F^{2}(x,y))_{\cdot ij}  \label{metric_tensor_F}
\end{equation}%
is positive definite.

By its dependence on the fiber coordinate $y,$ the Finslerian metric tensor $%
g_{ij}$ "lives"\ on the tangent bundle $TM,$ hence it is natural to perform
the study on this bundle\footnote{%
Another natural variant is the pullback bundle $\pi ^{\ast }TM,$ used in 
\cite{Mo-book}, \ \cite{Shen}.}, \cite{Bucataru}, \cite{Lagrange}.

The arc length of a curve $c$ on the Finsler space $(M,g)$ is given, \cite%
{Shen}, by:%
\begin{equation}
l(c)=~\underset{c}{\int }F(x,dx).  \label{arc_length}
\end{equation}

A key ingredient in Finsler geometry, which considerably simplifies
calculations, are adapted frames given by Ehresmann (nonlinear, \cite%
{Bucataru}, \cite{Lagrange}, \cite{Shen}) connections on $TM.$ The typical
choice is the \textit{Cartan nonlinear connection }$TTM=HTM\oplus VTM$,%
\textit{\ }having the property that complete lifts to $TM$ of tangent
vectors to unit speed geodesics of $M$ are always horizontal. The Cartan
nonlinear connection is locally defined by the coefficients%
\begin{equation}
G_{~j}^{i}:=G_{~\cdot j}^{i},  \label{Cartan_conn}
\end{equation}%
where $G^{i}=G^{i}(x,y)$ are obtained from the equations of unit speed
geodesics of $(M,F)$:

\begin{equation}
\dfrac{dy^{i}}{ds}+2G^{i}(x,y)=0,~\ y=\dot{x}.  \label{spray_coeff}
\end{equation}%
The (generally, nonholonomic)\ adapted basis on $\Gamma (TTM)$ is then given
by: 
\begin{equation}
(\delta _{i}=\dfrac{\partial }{\partial x^{i}}-G_{~i}^{j}(x,y)\dfrac{%
\partial }{\partial y^{j}},~~\ \ \dot{\partial}_{i}=\dfrac{\partial }{%
\partial y^{i}})  \label{general_adapted_basis}
\end{equation}%
and its dual is: $(dx^{i},~\ \delta y^{i}=dy^{i}+G_{~j}^{i}dx^{j}).$ Any
vector field $X$ on $TM$ can be decomposed as: $X=hX+vX,$ $\ hX=X^{i}\delta
_{i},$ $vX=\hat{X}^{i}\dot{\partial}_{i};$ its horizontal component $hX$ and
its vertical component $vX$ are vector fields on $TM$.

In the presence of a nonlinear connection, vector fields on $M$ can be
identified with horizontal vector fields on $TM$ via the horizontal lift 
\begin{equation*}
l^{h}:\Gamma (TM)\rightarrow \Gamma (HTM),~\ ~\ \ \mathbf{X}=\mathbf{X}%
^{i}\partial _{i}\mapsto l^{h}(\mathbf{X})=(\mathbf{X}^{i}\circ \pi )\delta
_{i}.
\end{equation*}%
The Finslerian metric tensor $g$ gives rise to a scalar product%
\begin{equation*}
\left\langle X,Y\right\rangle =g_{ij}X^{i}Y^{j}
\end{equation*}%
of horizontal vector fields $X=X^{i}\delta _{i},Y=Y^{j}\delta _{j}\in \Gamma
(HTM).$

The following (1,1)-type tensor fields on $TM$:%
\begin{equation}
J=\dot{\partial}_{i}\otimes dx^{i},~~\theta =\delta _{i}\otimes \delta y^{i},
\label{tangent_structure}
\end{equation}%
have the role of mapping horizontal vector fields into vertical ones and
vice-versa ($J(\delta _{i})=\dot{\partial}_{i},$ $J(\dot{\partial}_{i})=0;$ $%
\theta (\dot{\partial}_{i})=\delta _{i},$ $\theta (\delta _{i})=0$).

\bigskip

As covariant differentiation\textbf{\ }rule for tensors on $TM,$ we will use
the one given by the \textit{Cartan affine connection} $D$ on $TM,$ \cite%
{Bucataru}, \cite{Lagrange}, locally described by the coefficients:%
\begin{equation}
D_{\delta _{k}}\delta _{j}=~\Gamma _{~jk}^{i}\delta _{i},~\ D_{\delta _{k}}%
\dot{\partial}_{j}=\Gamma _{~jk}^{i}\dot{\partial}_{i},~\ D_{\dot{\partial}%
_{k}}\delta _{j}=C_{~jk}^{i}\delta _{i}~D_{\dot{\partial}_{k}}\dot{\partial}%
_{j}=C_{~jk}^{i}\dot{\partial}_{i},  \label{chern_conn}
\end{equation}%
where $\Gamma _{~jk}^{i}=\Gamma _{~jk}^{i}(x,y),$ $%
C_{~jk}^{i}=C_{~jk}^{i}(x,y)$ are as follows:%
\begin{equation}
\Gamma _{~jk}^{i}=\dfrac{1}{2}g^{ih}(\delta _{k}g_{hj}+\delta
_{j}g_{hk}-\delta _{h}g_{jk});~\ \ \ \ \ C_{~jk}^{i}=\dfrac{1}{2}%
g^{ih}g_{hj\cdot k}.  \label{Cartan_tensor_comps}
\end{equation}%
The Cartan connection is metrical and preserves by parallelism the
horizontal and vertical distributions on $TTM$, i.e.,%
\begin{equation}
D_{X}(hY)=h~D_{X}Y,~~~D_{X}(vY)=v~D_{X}Y,~\ \forall X,Y\in \Gamma (TTM).
\label{d-connection}
\end{equation}

Its torsion $\mathcal{T}$ has, in the adapted frame, the following nonzero
local components:%
\begin{equation}
\mathcal{T}(X,Y)=R_{~jk}^{i}\dot{\partial}_{i}\otimes dx^{k}\otimes
dx^{j}+C_{~jk}^{i}\delta _{i}\otimes \delta y^{k}\otimes dx^{j}+P_{~jk}^{i}%
\dot{\partial}_{i}\otimes \delta y^{k}\otimes dx^{j},  \label{torsion_def}
\end{equation}%
with $C_{~jk}^{i}$ as in (\ref{Cartan_tensor_comps}) and:%
\begin{equation}
R_{~jk}^{i}=\delta _{k}G_{~j}^{i}-\delta _{j}G_{~k}^{i},~\
P_{~jk}^{i}=y^{l}C_{~jk|l}^{i}  \label{local_expr_P}
\end{equation}%
(where $_{|l}:=D_{\delta _{l}}$). The first of the three terms in (\ref%
{torsion_def})\ is the curvature of the nonlinear connection $N$ and the
other two define two specific geometric objects to Finsler geometry:

2) the \textit{Cartan tensor\footnote{%
The Cartan tensor is a measure of the non-Riemannian character of the
Finsler metric; the equality $C=0$ is equivalent to the fact that the metric
is Riemannian.}}:\ $C(X,Y):=h\mathcal{T}(JX,Y),~\ \forall X,Y\in \Gamma
(HTM),$ in local writing:\ $C=C_{~jk}^{i}\delta _{i}\otimes dx^{k}\otimes
dx^{j}$;

3)\ the \textit{Landsberg tensor }$P(X,Y):=\theta \mathcal{T}(JX,Y),~\
\forall X,Y\in \Gamma (HTM),$ in local coordinates:\ $P:=P_{~jk}^{i}\delta
_{i}\otimes dx^{k}\otimes dx^{j}$.

The Cartan and the Landsberg tensors obey similar properties:

i) the scalar products $\left\langle C(X,Y),W\right\rangle ,$ $\left\langle
P(X,Y),W\right\rangle $ are totally symmetric in $X,Y,W\in \Gamma (HTM);$

ii) contraction of $C$ and $P$ with $y$ provides zero, i.e.:%
\begin{equation}
C(l^{h}(y),~\cdot )=0,~P(l^{h}(y),~\cdot )=0.  \label{property_C}
\end{equation}

\bigskip

The curvature $\mathcal{R}$ of the Cartan connection $D$ also splits in the
adapted basis into horizontal and vertical components. The local expressions
of these components can be found, for instance, in \cite{Bucataru}, \cite%
{Lagrange}; here, we will only mention some of their properties:

a) $\forall U,V\in \Gamma (TM):~\mathcal{R}(vU,hV)(l^{h}y)=P(\theta U,hV);$ $%
~\ \ \mathcal{R}(vU,vV)(l^{h}y)=0;$

b)\ the $h(hh)$ component of $\mathcal{R}$ gives rise to the \textit{Jacobi
endomorphism} of $(M,F)$, \cite{Bucataru}, understood here as a (1,1)-type
horizontal tensor:%
\begin{equation}
R(X)=\mathcal{R}(X,l^{h}y)l^{h}y,~\ \forall X\in \Gamma (HTM);  \label{def_R}
\end{equation}%
in local writing, $R=R_{~j}^{i}\delta _{i}\otimes dx^{j},~\
R_{~j}^{i}=R_{~kj}^{i}y^{k}$. The components $R_{ik}$ are symmetric,
therefore, 
\begin{equation}
\left\langle R(X),Y\right\rangle =\left\langle R(Y),X\right\rangle ,~\
\forall X,Y\in \Gamma (HTM).  \label{property_R}
\end{equation}

If the system $\{l^{h}(y),X\}$ is orthonormal, then $K(X):=\dfrac{1}{F^{2}}%
\left\langle R(X),X\right\rangle $ is equal to the \textit{flag curvature }%
of $(M,F),$ \cite{Shen}, which represents an analogue of sectional curvature
from Riemannian spaces.

\textbf{Particular cases:}

1) In\ Riemannian spaces, we have $g_{ij}=g_{ij}(x),$ $C=0,$ $P=0,$ while
the $h(hh)$ component $\mathcal{R}(hU,hV)hW$ ($U,V,W\in \Gamma (TM)$) is, up
to a horizontal lift, the Riemannian curvature of $(M,g).$

2) \textit{Locally Minkowski spaces} ("flat" Finsler spaces) are spaces in
which there exists around any point a local chart in which $%
g_{ij}=g_{ij}(y). $ In this case, $\Gamma _{~jk}^{i}=0,$ $R=0,$ $P=0,$ but,
generally, $C\not=0.$ Geodesics of a locally Minkowski space are straight
lines.

3) \textit{Landsberg spaces} are defined by the condition $P=0.$

\section{Bienergy of a curve and its first variation}

Consider a curve $c:I\rightarrow M,$ $s\mapsto (x^{i}(s))$ (where $I\subset 
\mathbb{R}$ is an interval), parametrized by its arc length and its lift $%
c^{\prime }:I\rightarrow TM,$ $s\mapsto (x^{i}(s),\dot{x}^{i}(s))$ to $TM;$
that is, $c$ has unit Finslerian speed:%
\begin{equation}
F(x,\dot{x})=1.  \label{unit_speed}
\end{equation}%
Tangent vectors of $c^{\prime }$ are expressed in the adapted basis as:%
\begin{equation*}
T^{\prime }:=c_{\ast }^{\prime }(\partial _{s})=y^{i}\delta _{i}+\dfrac{%
\delta y^{i}}{ds}\dot{\partial}_{i},~\ \ y^{i}=\dot{x}^{i}.
\end{equation*}%
We will denote simply by $D:=D^{c^{\prime }}$ the pullback of the Cartan
affine connection $D$\textit{\ }by $c^{\prime }.$ Taking into account that $%
\Gamma _{~jk}^{i}y^{k}=G_{~j}^{i}$ (\cite{Bucataru}, \cite{Lagrange}), the
components of $T^{\prime }$ in the adapted basis can be re-expressed%
\footnote{%
Here and in the following, covariant differentiation is made, \cite{Shen},
with "reference vector" $T,$ i.e., in the expressions of the connection
coefficients $\Gamma =\Gamma (x,y)$ and $C=C(x,y),$ we set $y^{i}=\dot{x}^{i}
$).} as: 
\begin{equation}
T:=hT^{\prime }=l^{h}(y),~\ \ vT^{\prime }=J(D_{\partial _{s}}T)
\label{horizontal_T}
\end{equation}

\bigskip 

The \textit{energy} of the curve $c$ is given, \cite{Mo-book}, by%
\begin{equation}
E_{1}(c)=\dfrac{1}{2}\underset{c}{\int }\left\langle T,T\right\rangle ds.
\label{energy}
\end{equation}%
Critical points of the energy, i.e., \textit{harmonic curves}, \cite{Mo-book}%
, are constant speed geodesics of $(M,F),$ described by:%
\begin{equation}
D_{\partial _{s}}T=0.  \label{harmonic_curves}
\end{equation}%
The horizontal vector field%
\begin{equation}
\tau (c):=D_{\partial _{s}}T,  \label{tension}
\end{equation}%
defined along the curve $c,$ is called (by analogy with the Riemannian
case), \cite{Mo-book}, the \textit{tension }of $c$. The norm of the tension
field, i.e., $\kappa =\sqrt{\left\langle D_{\partial _{s}}T,D_{\partial
_{s}}T\right\rangle },$ gives the \textit{geodesic curvature} of the curve $%
c;$ we will speak more in detail about geodesic curvature in the next
section. 

It appears as natural to define the bienergy as follows:

\begin{definition}
1)\ The \textit{bienergy }of a curve $c$ on a Finsler space $(M,F)$ is:%
\begin{equation}
E_{2}(c)=\dfrac{1}{2}\underset{c}{\int }\left\langle D_{\partial
_{s}}T,D_{\partial _{s}}T\right\rangle ds.  \label{bienergy}
\end{equation}%
2)\ Critical points of the bienergy are called biharmonic curves of $(M,F).$
\end{definition}

\bigskip

There holds the following result:

\begin{theorem}
A curve $c$ on the Finsler space $(M,F)$ is biharmonic if and only if: 
\begin{equation}
\tau _{2}(c):=D_{\partial _{s}}^{3}T+R(D_{\partial _{s}}T)-P(D_{\partial
_{s}}T,D_{\partial _{s}}T)+C(D_{\partial _{s}}T,D_{\partial _{s}}^{2}T)=0.
\label{bitension}
\end{equation}
\end{theorem}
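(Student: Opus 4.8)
The plan is to obtain (\ref{bitension}) as the Euler--Lagrange equation of $E_{2}$ via its first variation. I would take a smooth variation $\Phi (s,t)=c_{t}(s)$ of $c=c_{0}$ with $s$ held as a fixed parameter (so that the domain measure $ds$ does not vary), keeping the endpoints together with their tangent vectors fixed for every $t$; this is the natural boundary condition for the second-order functional $E_{2}$ and is what makes the boundary terms from the integrations by parts vanish. Lifting the family to $TM$ as $c_{t}^{\prime }$ and writing $D$ for the pullback of the Cartan connection, set $T=hT^{\prime }=l^{h}(y)$, let $V^{\prime }$ be the lifted variation field and $V:=hV^{\prime }$ its horizontal part. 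Since $[\partial _{s},\partial _{t}]=0$ and $D$ is metrical,
\begin{equation*}
\frac{d}{dt}E_{2}(c_{t})=\underset{c}{\int }\left\langle D_{\partial _{t}}D_{\partial _{s}}T,D_{\partial _{s}}T\right\rangle ds ,
\end{equation*}
so the task reduces to rewriting $D_{\partial _{t}}D_{\partial _{s}}T$ through $s$-derivatives of $V$ plus curvature and torsion corrections.

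Two commutation identities for $D$ achieve this. The definition of the curvature, with $[\partial _{s},\partial _{t}]=0$, gives
\begin{equation*}
D_{\partial _{t}}D_{\partial _{s}}T=D_{\partial _{s}}D_{\partial _{t}}T+\mathcal{R}(V^{\prime },T^{\prime })T ,
\end{equation*}
and the torsion identity $D_{\partial _{t}}T^{\prime }-D_{\partial _{s}}V^{\prime }=\mathcal{T}(V^{\prime },T^{\prime })$, projected onto the horizontal distribution that $D$ preserves, yields $D_{\partial _{t}}T=D_{\partial _{s}}V+h\mathcal{T}(V^{\prime },T^{\prime })$. I would then decompose $T^{\prime }=T+vT^{\prime }$ and $V^{\prime }=V+vV^{\prime }$ and use $vT^{\prime }=J(D_{\partial _{s}}T)$ from (\ref{horizontal_T}), whence $\theta (vT^{\prime })=D_{\partial _{s}}T$.

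The two corrections are evaluated with the properties collected in Section 2. For the curvature, $\mathcal{R}(V,T)T=\mathcal{R}(V,l^{h}y)l^{h}y=R(V)$ by (\ref{def_R}), and $\mathcal{R}(V,vT^{\prime })T=-P(D_{\partial _{s}}T,V)$ by property a), while $\mathcal{R}(vV^{\prime },T)T$ and $\mathcal{R}(vV^{\prime },vT^{\prime })T$ vanish by property a) together with (\ref{property_C}). For the torsion, only the Cartan component of (\ref{torsion_def}) is horizontal-valued and its contraction with $y=hT^{\prime }$ is annihilated by (\ref{property_C}), leaving $h\mathcal{T}(V^{\prime },T^{\prime })=-C(V,D_{\partial _{s}}T)$. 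Collecting these gives
\begin{equation*}
\frac{d}{dt}E_{2}(c_{t})=\underset{c}{\int }\left\langle D_{\partial _{s}}^{2}V-D_{\partial _{s}}\!\left( C(V,D_{\partial _{s}}T)\right) +R(V)-P(D_{\partial _{s}}T,V),\ D_{\partial _{s}}T\right\rangle ds .
\end{equation*}

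It then remains to move every $D_{\partial _{s}}$ off $V$: integrating by parts twice in the first term and once in the second, discarding the boundary terms by the imposed boundary conditions, and using the symmetry (\ref{property_R}) of $R$ together with the total symmetry of the scalar products of $C$ and $P$ from property i) to bring $V$ into the last argument, the integrand becomes $\left\langle V,\tau _{2}(c)\right\rangle $ with $\tau _{2}(c)$ precisely as in (\ref{bitension}). As $V$ ranges over all admissible variation fields, the fundamental lemma of the calculus of variations then forces $\tau _{2}(c)=0$. I expect the main difficulty to lie in the bookkeeping on $TM$: routing each horizontal/vertical pairing of $\mathcal{R}$ and $\mathcal{T}$ through properties a), i) and (\ref{property_C}) into exactly one of $R$, $P$, $C$; tracking the reference-vector dependence $y=\dot{x}$ that generates the vertical parts to begin with; and carrying the signs consistently through the integrations by parts so that the cross terms combine into the stated $-P(D_{\partial _{s}}T,D_{\partial _{s}}T)+C(D_{\partial _{s}}T,D_{\partial _{s}}^{2}T)$.
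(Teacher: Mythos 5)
Your proposal is correct and follows essentially the same route as the paper's proof: the same first variation of $E_{2}$ using metricity of the Cartan connection, the same curvature and torsion commutation identities to rewrite $D_{\partial _{t}}D_{\partial _{s}}T$, the same identification of the four curvature pairings and the horizontal torsion component with $R$, $P$ and $C$ via properties a), i) and (\ref{property_C}), and the same integrations by parts to isolate $V$. The only differences are cosmetic (you state the endpoint conditions on the derivatives explicitly and reorder the integrations by parts), so nothing further is needed.
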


\begin{proof}
Let us consider smooth variations $c_{\varepsilon }(s)=\tilde{c}(\varepsilon
,s),$ $c_{0}(s)=c(s)$ with fixed endpoints of $c$ and their lifts $%
c_{\varepsilon }^{\prime }=(c_{\varepsilon },\dfrac{dc_{\varepsilon }}{ds})$
to $TM.$ With the notations: 
\begin{equation}
\mathbf{T}^{\prime }:=\tilde{c}_{\ast }^{\prime }(\partial _{s}),\ \mathbf{V}%
^{\prime }:=\tilde{c}_{\ast }^{\prime }(\partial _{\varepsilon });~\ \ \ 
\mathbf{T}:=h\mathbf{T}^{\prime },~\mathbf{V}:=h\mathbf{V}^{\prime },
\label{boldface_T}
\end{equation}%
we have $v\mathbf{T}^{\prime }=J(D_{\partial _{s}}\mathbf{T})$ and, at $%
\varepsilon =0$: 
\begin{equation*}
T=\mathbf{T}\rfloor _{\varepsilon =0},~\ V:=\mathbf{V}\rfloor _{\varepsilon
=0}.
\end{equation*}%
Taking into account that the Cartan connection $D$ is metrical, we can write:%
\begin{equation}
\dfrac{dE_{2}}{d\varepsilon }(c_{\varepsilon })=\dfrac{1}{2}\underset{c}{%
\int }\dfrac{d}{d\varepsilon }\left\langle D_{\partial _{s}}\mathbf{T}%
,D_{\partial _{s}}\mathbf{T}\right\rangle ds=~\underset{c}{\int }%
\left\langle D_{\partial _{\varepsilon }}D_{\partial _{s}}\mathbf{T}%
,D_{\partial _{s}}\mathbf{T}\right\rangle ds.  \label{first_var1}
\end{equation}%
Further, since $D$ preserves the distributions generated by $N,$ (\ref%
{boldface_T}),%
\begin{equation*}
\begin{array}{c}
D_{\partial _{\varepsilon }}D_{\partial _{s}}\mathbf{T}=\mathcal{R}(\mathbf{V%
}^{\prime },\mathbf{T}^{\prime })\mathbf{T}+~D_{\partial _{s}}D_{\partial
_{\varepsilon }}(h\mathbf{T}^{\prime })~=\mathcal{R}(\mathbf{V}^{\prime },%
\mathbf{T}^{\prime })\mathbf{T}+ \\ 
+~D_{\partial _{s}}(hD_{\partial _{\varepsilon }}\mathbf{T}^{\prime })~=%
\mathcal{R}(\mathbf{V}^{\prime },\mathbf{T}^{\prime })\mathbf{T}%
+~D_{\partial _{s}}(h\mathcal{T}(\mathbf{V}^{\prime },\mathbf{T}^{\prime
}))+D_{\partial _{s}}^{2}(h\mathbf{V}^{\prime });%
\end{array}%
\end{equation*}%
with $\mathbf{V}=h\mathbf{V}^{\prime }$ and substituting into (\ref%
{first_var1}), we get:%
\begin{equation}
\dfrac{dE_{2}}{d\varepsilon }=\underset{c}{\int }\left\langle \mathcal{R}(%
\mathbf{V}^{\prime },\mathbf{T}^{\prime })\mathbf{T}+~D_{\partial _{s}}(h%
\mathcal{T}(\mathbf{V}^{\prime },\mathbf{T}^{\prime }))+D_{\partial _{s}}^{2}%
\mathbf{V},~D_{\partial _{s}}\mathbf{T}\right\rangle ds.  \label{firstvar2}
\end{equation}%
Let us evaluate each term in the above integral. First of all, the curvature
term is:%
\begin{eqnarray*}
&&I_{1}:=\underset{c}{\int }\left\langle \mathcal{R}(\mathbf{V}^{\prime },%
\mathbf{T}^{\prime })\mathbf{T},~D_{\partial _{s}}\mathbf{T}\right\rangle ds=%
\underset{c}{\int }\left\langle \mathcal{R}(\mathbf{V},\mathbf{T})\mathbf{T}+%
\mathcal{R}(v\mathbf{V}^{\prime },\mathbf{T})\mathbf{T}\right. + \\
&&+\left. \mathcal{R}(\mathbf{V},v\mathbf{T}^{\prime })\mathbf{T}+\mathcal{R}%
(v\mathbf{V}^{\prime },v\mathbf{T}^{\prime })\mathbf{T},~D_{\partial _{s}}%
\mathbf{T}\right\rangle ds.
\end{eqnarray*}%
Along each of the curves $c_{\varepsilon },$ we have $\mathbf{T}~=l^{h}(y)$;
taking into account the properties of the curvature components, the terms $%
\left\langle \mathcal{R}(v\mathbf{V}^{\prime },\mathbf{T})\mathbf{T}%
,D_{\partial _{s}}\mathbf{T}\right\rangle $ and $\left\langle \mathcal{R}(v%
\mathbf{V}^{\prime },v\mathbf{T}^{\prime })\mathbf{T},~D_{\partial _{s}}%
\mathbf{T}\right\rangle $ vanish, while the two remaining ones can be
written in a simpler form:%
\begin{eqnarray*}
\left\langle \mathcal{R}(\mathbf{V},\mathbf{T})\mathbf{T},D_{\partial _{s}}%
\mathbf{T}\right\rangle ~ &=&\left\langle R(\mathbf{V}),D_{\partial _{s}}%
\mathbf{T}\right\rangle \overset{(\ref{property_R})}{=}\left\langle
R(D_{\partial _{s}}\mathbf{T}),\mathbf{V}\right\rangle ; \\
\left\langle \mathcal{R}(h\mathbf{V}^{\prime },v\mathbf{T}^{\prime })\mathbf{%
T},D_{\partial _{s}}\mathbf{T}\right\rangle &=&-\left\langle P(D_{\partial
_{s}}\mathbf{T,V}),D_{\partial _{s}}\mathbf{T})\right\rangle =-\left\langle
P(D_{\partial _{s}}\mathbf{T},D_{\partial _{s}}\mathbf{T}),\mathbf{V}%
)\right\rangle ;
\end{eqnarray*}%
therefore, we can write:%
\begin{equation}
I_{1}=\underset{c}{\int }\left\langle R(D_{\partial _{s}}\mathbf{T}%
)-P(D_{\partial _{s}}\mathbf{T},D_{\partial _{s}}\mathbf{T}),\mathbf{V}%
\right\rangle ds.  \label{I1}
\end{equation}%
The second (torsion) term in (\ref{firstvar2}), i.e.:%
\begin{equation*}
I_{2}:=\underset{c}{\int }\left\langle D_{\partial _{s}}(h\mathcal{T}(%
\mathbf{V}^{\prime },\mathbf{T}^{\prime })),~D_{\partial _{s}}\mathbf{T}%
\right\rangle ds
\end{equation*}%
will be integrated by parts:%
\begin{equation*}
I_{2}=\underset{c}{\int }\{\partial _{s}\left\langle h\mathcal{T}(\mathbf{V}%
^{\prime },\mathbf{T}^{\prime }),D_{\partial _{s}}\mathbf{T}\right\rangle
-\left\langle (h\mathcal{T}(\mathbf{V}^{\prime },\mathbf{T}^{\prime
})),~D_{\partial _{s}}^{2}\mathbf{T}\right\rangle \}ds.
\end{equation*}%
Since the variation has fixed endpoints, we have: $\underset{c}{\int }%
\partial _{s}\left\langle h\mathcal{T}(\mathbf{V}^{\prime },\mathbf{T}%
^{\prime }),D_{\partial _{s}}\mathbf{T}\right\rangle ds=0$; using (\ref%
{torsion_def}) and (\ref{property_C}) in the evaluation of the remaining
term, we are led to:%
\begin{equation*}
I_{2}=\underset{c}{\int }\left\langle C(D_{\partial _{s}}\mathbf{T},\mathbf{V%
}),D_{\partial _{s}}^{2}\mathbf{T}\right\rangle ds=\underset{c}{\int }%
\left\langle C(D_{\partial _{s}}\mathbf{T},D_{\partial _{s}}^{2}\mathbf{T}),~%
\mathbf{V}\right\rangle ds.
\end{equation*}%
Finally, integrating twice by parts the third term in (\ref{firstvar2}),
i.e.,%
\begin{equation*}
I_{3}:=\underset{c}{\int }\left\langle D_{\partial _{s}}^{2}\mathbf{V}%
,~D_{\partial _{s}}\mathbf{T}\right\rangle ds,
\end{equation*}%
we are led to:%
\begin{equation*}
I_{3}:=\underset{c}{\int }\left\langle D_{\partial _{s}}^{3}\mathbf{T},%
\mathbf{V}\right\rangle ds.
\end{equation*}%
Summing up and evaluating at $\varepsilon =0,$ the first variation of the
bienergy is:%
\begin{eqnarray*}
&&\dfrac{dE_{2}}{d\varepsilon }=I_{1}+I_{2}+I_{3}= \\
&=&\underset{c}{\int }\left\langle D_{\partial _{s}}^{3}T+R(D_{\partial
_{s}}T)-P(D_{\partial _{s}}T,D_{\partial _{s}}T)+C(D_{\partial
_{s}}T,D_{\partial _{s}}^{2}T),V\right\rangle ds,
\end{eqnarray*}%
which proves the statement.
\end{proof}

The vector field $\tau _{2}(c)$ in (\ref{bitension}) is called the \textit{%
bitension }of $c.$ In the following, we will also write the biharmonic
equation (\ref{bitension}) as:%
\begin{equation}
D_{\partial _{s}}^{3}T=\mathcal{A},  \label{bitension_A}
\end{equation}%
where 
\begin{equation}
\mathcal{A}:=-R(D_{\partial _{s}}T)+P(D_{\partial _{s}}T,D_{\partial
_{s}}T)-C(D_{\partial _{s}}T,D_{\partial _{s}}^{2}T).  \label{def_A}
\end{equation}

\bigskip

\textbf{Remark. }Any unit speed geodesic on $M$ is biharmonic; a
non-geodesic biharmonic curve will be called \textit{proper biharmonic.}

\bigskip

\textbf{Particular cases:}

1)\ If $M$ is a Riemannian space, then the equation of biharmonic curves
reduces to:%
\begin{equation*}
D_{\partial _{s}}^{3}T+R(D_{\partial _{s}}T)=0\Leftrightarrow D_{\partial
_{s}}^{3}T-\mathcal{R}(T,D_{\partial _{s}}T)T=0.
\end{equation*}%
Up to a projection onto the base manifold, this is the known equation (\ref%
{biharmonic_riemann}).

2)\ In locally Minkowski spaces, equation (\ref{bitension}) becomes:%
\begin{equation*}
D_{\partial _{s}}^{3}T+C(D_{\partial _{s}}T,D_{\partial _{s}}^{2}T)=0.
\end{equation*}

3)\ In Landsberg spaces, $\tau _{2}(c)=D_{\partial _{s}}^{3}T+R(D_{\partial
_{s}}T)+C(D_{\partial _{s}}T,D_{\partial _{s}}^{2}T).$

\section{Frenet frame and geodesic curvature}

We will suppose that, along $c,$ the vectors $\{T,D_{\partial
_{s}}T,D_{\partial _{s}}^{2}T,...,D_{\partial _{s}}^{n}T\}$ are linearly
independent; in particular, this implies that $D_{\partial _{s}}T\not=0,$
i.e., $c$ is non-geodesic. Similarly to Riemannian geometry, \cite{Caddeo}, 
\cite{Maeta}, we get the \textit{Frenet frame }along $c$ as the
orthonormalization of these vectors, obeying the relations:%
\begin{equation}
\left\{ 
\begin{array}{l}
e_{1}=T=l^{h}(y) \\ 
D_{\partial _{s}}e_{1}=\kappa _{1}e_{2} \\ 
D_{\partial _{s}}e_{2}=-\kappa _{1}e_{1}+\kappa _{2}e_{3} \\ 
D_{\partial _{s}}e_{i}=-\kappa _{i-1}e_{i-1}+\kappa _{i}e_{i+1},~\ i=2,..,n-1
\\ 
... \\ 
D_{\partial _{s}}e_{n}=-\kappa _{n-1}e_{n-1};%
\end{array}%
\right.   \label{Frenet_frame}
\end{equation}%
here, $e_{1},...,e_{n}$ are regarded as sections of the bundle $(c^{\prime
})^{-1}(HTM)$); the first curvature\footnote{%
In dimension 2, the first curvature is replaced by the signed curvature, 
\cite{Montaldo}.} $\kappa _{1}$, i.e., 
\begin{equation}
\kappa _{1}:=\sqrt{\left\langle D_{\partial _{s}}T,D_{\partial
_{s}}T\right\rangle }=\kappa   \label{k1}
\end{equation}%
is the geodesic curvature of $c.$

Using (\ref{Frenet_frame}), the expressions of the covariant derivatives $%
D_{\partial _{s}}^{i}T,$ $i=\overline{1,3}$ in the Frenet frame are:%
\begin{equation}
\left\{ 
\begin{array}{l}
D_{\partial _{s}}T=\kappa _{1}e_{2} \\ 
D_{\partial _{s}}^{2}T=-\kappa _{1}^{2}e_{1}+\kappa _{1}^{\prime
}e_{2}+\kappa _{1}\kappa _{2}e_{3} \\ 
D_{\partial _{s}}^{3}T=-3\kappa _{1}\kappa _{1}^{\prime }e_{1}+(\kappa
_{1}^{\prime \prime }-\kappa _{1}^{3}-\kappa _{1}\kappa
_{2}^{2})e_{2}+(2\kappa _{1}^{\prime }\kappa _{2}+\kappa _{1}\kappa
_{2}^{\prime })e_{3}+\kappa _{1}\kappa _{2}\kappa _{3}e_{4}.%
\end{array}%
\right.  \label{Frenet_derivatives_T}
\end{equation}

Substituting (\ref{Frenet_derivatives_T}) into the biharmonic equation (\ref%
{bitension_A}) and using the fact that the Cartan tensor $C$ vanishes on $%
e_{1},$ we have:%
\begin{equation}
\mathcal{A}=-\kappa _{1}R(e_{2})+\kappa _{1}^{2}P(e_{2},e_{2})-\kappa
_{1}\kappa _{1}^{\prime }C(e_{2},e_{2})-\kappa _{1}^{2}\kappa
_{2}C(e_{2},e_{3}).  \label{definition_A}
\end{equation}%
Taking the scalar product of the third equality (\ref{Frenet_derivatives_T}%
)\ with $e_{i},$ $i=\overline{1,n}$ and noticing that $\left\langle \mathcal{%
A},e_{1}\right\rangle =0,$ we find:

\begin{proposition}
A unit speed curve $c:I\rightarrow M$ is biharmonic if and only if:%
\begin{equation}
\left\{ 
\begin{array}{l}
\kappa _{1}\kappa _{1}^{\prime }=0; \\ 
\kappa _{1}^{\prime \prime }-\kappa _{1}^{3}-\kappa _{1}\kappa
_{2}^{2}=\left\langle \mathcal{A},e_{2}\right\rangle \\ 
2\kappa _{1}^{\prime }\kappa _{2}+\kappa _{1}\kappa _{2}^{\prime
}=\left\langle \mathcal{A},e_{3}\right\rangle \\ 
\kappa _{1}\kappa _{2}\kappa _{3}=\left\langle \mathcal{A}%
,e_{4}\right\rangle ; \\ 
0=\left\langle \mathcal{A},e_{i}\right\rangle ,~\ i\geq 5.%
\end{array}%
\right.  \label{biharmonic_eqn_frenet}
\end{equation}
\end{proposition}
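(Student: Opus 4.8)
The plan is to convert the single vector identity (\ref{bitension_A}), namely $D_{\partial _{s}}^{3}T=\mathcal{A}$, which by the preceding Theorem is equivalent to biharmonicity, into a system of scalar equations by projecting onto the orthonormal Frenet frame $\{e_{1},\dots ,e_{n}\}$. Since this frame is orthonormal for $\left\langle \cdot ,\cdot \right\rangle $, the vector equation holds if and only if $\left\langle D_{\partial _{s}}^{3}T,e_{i}\right\rangle =\left\langle \mathcal{A},e_{i}\right\rangle $ for every $i=\overline{1,n}$. The left-hand components are already at my disposal: from the third line of (\ref{Frenet_derivatives_T}) one reads off $\left\langle D_{\partial _{s}}^{3}T,e_{1}\right\rangle =-3\kappa _{1}\kappa _{1}^{\prime }$, $\left\langle D_{\partial _{s}}^{3}T,e_{2}\right\rangle =\kappa _{1}^{\prime \prime }-\kappa _{1}^{3}-\kappa _{1}\kappa _{2}^{2}$, $\left\langle D_{\partial _{s}}^{3}T,e_{3}\right\rangle =2\kappa _{1}^{\prime }\kappa _{2}+\kappa _{1}\kappa _{2}^{\prime }$, $\left\langle D_{\partial _{s}}^{3}T,e_{4}\right\rangle =\kappa _{1}\kappa _{2}\kappa _{3}$, and $\left\langle D_{\partial _{s}}^{3}T,e_{i}\right\rangle =0$ for $i\geq 5$.

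The one step requiring an argument, rather than mere reading-off, is the claim $\left\langle \mathcal{A},e_{1}\right\rangle =0$; this is the crux of the proposition, and everything else is bookkeeping. To establish it I would use the Frenet expression (\ref{definition_A}) for $\mathcal{A}$ together with $e_{1}=T=l^{h}(y)$ and the algebraic properties of $R$, $P$, $C$ recorded in Section 2. For the curvature term, the symmetry (\ref{property_R}) gives $\left\langle R(e_{2}),e_{1}\right\rangle =\left\langle R(e_{1}),e_{2}\right\rangle =\left\langle R(l^{h}y),e_{2}\right\rangle $, and $R(l^{h}y)=\mathcal{R}(l^{h}y,l^{h}y)\,l^{h}y=0$ by antisymmetry of $\mathcal{R}$ in its first two arguments. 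For the Landsberg and Cartan contributions, the total symmetry of $\left\langle P(X,Y),W\right\rangle $ and $\left\langle C(X,Y),W\right\rangle $ in $(X,Y,W)$ lets me move $e_{1}$ into the first slot, after which the vanishing conditions (\ref{property_C}), i.e. $P(l^{h}y,\cdot )=0$ and $C(l^{h}y,\cdot )=0$, annihilate each term. Hence $\left\langle \mathcal{A},e_{1}\right\rangle =0$.

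With this in hand, the $e_{1}$-projection of (\ref{bitension_A}) reads $-3\kappa _{1}\kappa _{1}^{\prime }=0$, equivalent to the first equation $\kappa _{1}\kappa _{1}^{\prime }=0$ of (\ref{biharmonic_eqn_frenet}), while the projections onto $e_{2},e_{3},e_{4}$ and onto $e_{i}$ ($i\geq 5$) reproduce verbatim the remaining four groups of relations; conversely, these five groups reassemble into the single vector identity, and equivalence with biharmonicity then follows from the Theorem. I expect no genuine obstacle beyond the verification of $\left\langle \mathcal{A},e_{1}\right\rangle =0$: once the orthonormal frame is fixed, the rest is the componentwise matching of two explicitly known horizontal vector fields along $c$.
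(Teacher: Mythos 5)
Your proposal is correct and follows essentially the same route as the paper: substitute the Frenet expansions (\ref{Frenet_derivatives_T}) into $D_{\partial _{s}}^{3}T=\mathcal{A}$, project onto the orthonormal frame $\{e_{1},\dots ,e_{n}\}$, and observe that $\left\langle \mathcal{A},e_{1}\right\rangle =0$ via the symmetry of $R$ and the total symmetry plus $y$-annihilation properties of $P$ and $C$. Your verification of $\left\langle \mathcal{A},e_{1}\right\rangle =0$ is in fact spelled out in more detail than the paper's, which simply asserts it after using $C(e_{1},\cdot )=0$ to reduce $\mathcal{A}$ to the form (\ref{definition_A}).
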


The assumption that $c$ is non-geodesic means that $\kappa _{1}\not=0,$
which, by (\ref{biharmonic_eqn_frenet}), gives us $\kappa _{1}^{\prime }=0.$
That is, if $c$ is a proper biharmonic curve parametrized by arc length then:%
\begin{equation*}
\kappa _{1}=const.\not=0.
\end{equation*}%
Since in the case when $\kappa _{1}=0$ (i.e., $c$ is a geodesic), the curve
is trivially biharmonic, we get a similar result to the one in Riemannian
geometry:

\begin{theorem}
\label{geodesic_curvature}Along any biharmonic curve of a Finsler space, the
geodesic curvature $\kappa _{1}=\left\Vert D_{\partial _{s}}T\right\Vert $
is constant.
\end{theorem}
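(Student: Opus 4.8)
The plan is to read the constancy of $\kappa_1$ directly off the Frenet-frame form of the biharmonic equations established in the preceding Proposition, namely system~(\ref{biharmonic_eqn_frenet}). The key observation is that the very first scalar equation in that system,
\begin{equation*}
\kappa_1\kappa_1'=0,
\end{equation*}
already contains everything needed. This equation arises precisely because $\left\langle \mathcal{A},e_1\right\rangle=0$: the Cartan tensor $C$ kills $e_1=T=l^h(y)$ by property~(\ref{property_C}), while the Jacobi endomorphism and Landsberg contributions $R(D_{\partial_s}T)$ and $P(D_{\partial_s}T,D_{\partial_s}T)$ are built from $e_2,e_3$ and are orthogonal to $e_1$ by the symmetry relations~(\ref{property_R}) and the total symmetry of $\langle P(X,Y),W\rangle$; hence the $e_1$-component of the bitension reduces to the $e_1$-component of $D_{\partial_s}^3T$, which from~(\ref{Frenet_derivatives_T}) equals $-3\kappa_1\kappa_1'$.

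First I would recall the dichotomy. If $c$ is a geodesic, then $D_{\partial_s}T=0$, so $\kappa_1=\|D_{\partial_s}T\|\equiv 0$, which is trivially constant; geodesics are biharmonic by the Remark following the bitension equation. Otherwise $c$ is non-geodesic, meaning by the standing assumption of Section~4 that $D_{\partial_s}T\neq 0$ and the Frenet frame is well defined, so $\kappa_1>0$ everywhere along $c$.

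Next I would invoke~(\ref{biharmonic_eqn_frenet}). Since $\kappa_1>0$ at every point, the factor $\kappa_1$ in the relation $\kappa_1\kappa_1'=0$ never vanishes, and we may divide to conclude $\kappa_1'=0$ pointwise. Therefore $\kappa_1$ is constant (and nonzero) along $c$. Combining the two cases, $\kappa_1=\|D_{\partial_s}T\|$ is constant along any biharmonic curve, which is the assertion.

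I do not expect a genuine obstacle here, since the Proposition has already done the analytic work of projecting the bitension onto the Frenet frame. The only point requiring care—and the closest thing to a subtlety—is justifying that $\kappa_1>0$ on all of $I$ for a proper biharmonic curve, so that division by $\kappa_1$ is legitimate uniformly rather than merely on the open set where $\kappa_1\neq 0$; this is guaranteed by the linear-independence hypothesis imposed at the start of Section~4, which forces $D_{\partial_s}T\neq 0$ identically and hence $\kappa_1$ bounded away from zero on compact subintervals. With that in hand the argument is immediate.
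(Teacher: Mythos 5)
Your proposal is correct and follows essentially the same route as the paper: the paper likewise deduces $\kappa_1'=0$ from the first equation $\kappa_1\kappa_1'=0$ of system~(\ref{biharmonic_eqn_frenet}) in the non-geodesic case (where $\kappa_1\neq 0$ by the standing linear-independence assumption of Section~4) and notes that the geodesic case is trivial. Your additional justification of $\left\langle \mathcal{A},e_1\right\rangle=0$ via~(\ref{property_C}) and~(\ref{property_R}) is exactly what the paper leaves implicit when deriving that system.
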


\section{Existence of proper biharmonic curves}

\subsection{Closed biharmonic curves}

We will assume in the following that $c$ is a smooth closed biharmonic
curve, i.e., $c$ and its derivatives coincide at its endpoints. We can
therefore write: 
\begin{equation*}
0=\underset{c}{\int }\dfrac{1}{2}\dfrac{d^{2}}{ds^{2}}\left\langle
D_{\partial _{s}}T,D_{\partial _{s}}T\right\rangle ds=\underset{c}{\int }%
\{\left\langle D_{\partial _{s}}^{3}T,D_{\partial _{s}}T\right\rangle
+\left\langle D_{\partial _{s}}^{2}T,D_{\partial _{s}}^{2}T\right\rangle
\}ds.
\end{equation*}

Using the biharmonic equation, this is: 
\begin{equation}
0=~\underset{c}{\int }\{\left\langle \mathcal{A},D_{\partial
_{s}}T\right\rangle +\left\langle D_{\partial _{s}}^{2}T,D_{\partial
_{s}}^{2}T\right\rangle \}ds.  \label{aux1}
\end{equation}

Let us evaluate $\underset{c}{\int }\{\left\langle \mathcal{A},D_{\partial
_{s}}T\right\rangle ds.$ By virtue of the total symmetry of $C,$ the term
involving it in the mentioned integral is: 
\begin{equation}
\underset{c}{\int }\left\langle -C(D_{\partial _{s}}T,D_{\partial
_{s}}^{2}T),D_{\partial _{s}}T\right\rangle ds=-\underset{c}{\int }%
\left\langle C(D_{\partial _{s}}T,D_{\partial _{s}}T),D_{\partial
_{s}}^{2}T\right\rangle ds.  \label{aux2}
\end{equation}%
Since the curve is closed, we have $\underset{c}{\int }\partial
_{s}\left\langle (C(D_{\partial _{s}}T,D_{\partial _{s}}T)),D_{\partial
_{s}}T\right\rangle =0$, hence, integration by parts in (\ref{aux2}) leads
to:%
\begin{equation*}
\begin{array}{c}
\underset{c}{\int }\left\langle -C(D_{\partial _{s}}T,D_{\partial
_{s}}T),D_{\partial _{s}}^{2}T\right\rangle ds=\underset{c}{\int }%
\left\langle D_{\partial _{s}}(C(D_{\partial _{s}}T,D_{\partial
_{s}}T)),D_{\partial _{s}}T\right\rangle ds= \\ 
=\underset{c}{\int }\left\langle (D_{\partial _{s}}C)(D_{\partial
_{s}}T,D_{\partial _{s}}T),D_{\partial _{s}}T\right\rangle +2\left\langle
C(D_{\partial _{s}}T,D_{\partial _{s}}^{2}T),D_{\partial _{s}}T\right\rangle
\}ds%
\end{array}%
\end{equation*}

Consider the following tensor acting on horizontal vector fields:%
\begin{equation}
\tilde{C}(X,Y,Z):=(D_{JX}C)(Y,Z),~\ \forall X,Y,Z\in \Gamma (HTM);\ 
\label{definition_C'}
\end{equation}%
then, using the total symmetry of $C$ and (\ref{local_expr_P}), we can write:%
\begin{equation*}
\begin{array}{l}
\underset{c}{\int }\left\langle -C(D_{\partial _{s}}T,D_{\partial
_{s}}^{2}T),D_{\partial _{s}}T\right\rangle ds=\dfrac{1}{3}\underset{c}{\int 
}\left\langle (D_{\partial _{s}}C)(D_{\partial _{s}}T,D_{\partial
_{s}}T),D_{\partial _{s}}T\right\rangle ds= \\ 
=\dfrac{1}{3}\underset{c}{\int }\{\left\langle P(D_{\partial
_{s}}T,D_{\partial _{s}}T)+\tilde{C}(D_{\partial _{s}}T,D_{\partial
_{s}}T,D_{\partial _{s}}T),D_{\partial _{s}}T\right\rangle \}ds.%
\end{array}%
\end{equation*}%
Substituting into (\ref{aux1}), we finally have:%
\begin{equation}
0=~\underset{c}{\int }\{\mathcal{F}(D_{\partial _{s}}T)+\left\langle
D_{\partial _{s}}^{2}T,D_{\partial _{s}}^{2}T\right\rangle \}ds,
\label{aux_F}
\end{equation}%
where:%
\begin{equation}
\mathcal{F}(X):=\left\langle -R(X)+\dfrac{4}{3}P(X,X)+\dfrac{1}{3}\tilde{C}%
(X,X,X),X\right\rangle ,~\ X\in \Gamma (HTM).  \label{definition_F}
\end{equation}

\bigskip

Assume that $\mathcal{F}(X)$ is nonnegative for any $X.$ Then from (\ref%
{aux_F}), it follows that $D_{\partial _{s}}^{2}T=0.$ We will show that the
latter equality implies that $c$ is harmonic. Indeed, taking the integral:%
\begin{equation*}
0=~\underset{c}{\int }\dfrac{d}{ds}\left\langle T,D_{\partial
_{s}}T\right\rangle ds=\underset{c}{\int }\{\left\langle D_{\partial
_{s}}T,D_{\partial _{s}}T\right\rangle +\left\langle T,D_{\partial
_{s}}^{2}T\right\rangle \}ds,
\end{equation*}%
\ the last term in the right hand side is zero, hence $\left\langle
D_{\partial _{s}}T,D_{\partial _{s}}T\right\rangle =0.$ By virtue of the
positive definiteness of $g,$ this leads to:%
\begin{equation*}
D_{\partial _{s}}T=0,
\end{equation*}%
i.e., $c$ is harmonic. We have thus proved:

\begin{proposition}
If, in the Finsler space $(M,F),$ the operator 
\begin{equation*}
\mathcal{F}:\Gamma (HTM)\rightarrow \mathbb{R},~\ \ \ \mathcal{F}%
(X)=\left\langle -R(X)+\dfrac{4}{3}P(X,X)+\dfrac{1}{3}\tilde{C}%
(X,X,X),X\right\rangle
\end{equation*}%
takes only nonnegative values, then any closed biharmonic curve on $M$ is a
geodesic.
\end{proposition}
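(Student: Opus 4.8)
The plan is to use the closedness of $c$ to convert the pointwise biharmonic equation into an integral identity whose integrand is a sum of manifestly nonnegative terms, and then read off geodesicity from the positive definiteness of $g$. I would begin from the fact that, over a closed curve, the integral of any exact $s$-derivative vanishes. Applying this to $\tfrac12\tfrac{d^{2}}{ds^{2}}\langle D_{\partial_s}T,D_{\partial_s}T\rangle$ and using that the Cartan connection $D$ is metrical yields
\[
0=\int_{c}\{\langle D_{\partial_s}^{3}T,D_{\partial_s}T\rangle+\langle D_{\partial_s}^{2}T,D_{\partial_s}^{2}T\rangle\}\,ds .
\]
I would then insert the biharmonic equation in the form \eqref{bitension_A}, $D_{\partial_s}^{3}T=\mathcal{A}$, so that the problem reduces to evaluating $\int_{c}\langle\mathcal{A},D_{\partial_s}T\rangle\,ds$ with $\mathcal{A}$ given by \eqref{def_A}.

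The decisive step, and the one I expect to be most delicate, is the treatment of the Cartan-tensor contribution $-\int_{c}\langle C(D_{\partial_s}T,D_{\partial_s}^{2}T),D_{\partial_s}T\rangle\,ds$. Here I would first use the total symmetry of $\langle C(\cdot,\cdot),\cdot\rangle$ to place $D_{\partial_s}^{2}T$ in the last slot, then integrate by parts, discarding the boundary term by closedness. The key algebraic identity is that along $c$ the covariant derivative decomposes as $D_{\partial_s}=D_{l^{h}(y)}+D_{J(D_{\partial_s}T)}$, since $hT'=l^{h}(y)$ and $vT'=J(D_{\partial_s}T)$ by \eqref{horizontal_T}; its horizontal part reproduces the Landsberg tensor via $P_{~jk}^{i}=y^{l}C_{~jk|l}^{i}$ of \eqref{local_expr_P}, and its vertical part is exactly $\tilde{C}$ of \eqref{definition_C'}, so that $(D_{\partial_s}C)(D_{\partial_s}T,D_{\partial_s}T)=P(D_{\partial_s}T,D_{\partial_s}T)+\tilde{C}(D_{\partial_s}T,D_{\partial_s}T,D_{\partial_s}T)$. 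Differentiating $C(D_{\partial_s}T,D_{\partial_s}T)$ produces three symmetric copies of the target integral, and solving for it is what fixes the coefficient $\tfrac13$; combined with the genuine $P$-term already present in $\mathcal{A}$, this is what produces the coefficient $\tfrac43$ in $\mathcal{F}$.

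Assembling the three contributions gives the identity \eqref{aux_F}, namely $0=\int_{c}\{\mathcal{F}(D_{\partial_s}T)+\langle D_{\partial_s}^{2}T,D_{\partial_s}^{2}T\rangle\}\,ds$ with $\mathcal{F}$ as in \eqref{definition_F}. Under the hypothesis $\mathcal{F}(X)\ge 0$, the integrand is a sum of two nonnegative functions — the second because $g$ is positive definite — so both vanish identically; in particular $D_{\partial_s}^{2}T=0$. To finish, I would promote this to harmonicity: integrating $\tfrac{d}{ds}\langle T,D_{\partial_s}T\rangle$ over the closed curve gives $0=\int_{c}\{\langle D_{\partial_s}T,D_{\partial_s}T\rangle+\langle T,D_{\partial_s}^{2}T\rangle\}\,ds=\int_{c}\langle D_{\partial_s}T,D_{\partial_s}T\rangle\,ds$, and positive definiteness forces $D_{\partial_s}T=0$, i.e.\ $c$ is a geodesic.
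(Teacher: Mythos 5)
Your argument is correct and follows essentially the same route as the paper: the same integral identity over the closed curve, the same symmetrization and integration by parts of the Cartan term with the splitting $D_{\partial_s}=D_{l^{h}(y)}+D_{J(D_{\partial_s}T)}$ producing the $\tfrac43 P+\tfrac13\tilde C$ coefficients, and the same final step forcing $D_{\partial_s}T=0$. No gaps.
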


\bigskip

\textbf{Particular case: }If $g$ is Riemannian, then%
\begin{equation*}
\mathcal{F}(X)=\left\langle -R(X),X\right\rangle =\left\langle -\mathcal{R}%
(X,l^{h}y)l^{h}y,X\right\rangle =-Riem_{g}(y,\pi _{\ast }X)\cdot \sigma ,
\end{equation*}%
where $Riem_{g}$ is the sectional curvature of $(M,g)$ and $\sigma
:=g(y,y)g(\pi _{\ast }X,\pi _{\ast }X)-(g(y,\pi _{\ast }X))^{2}\geq 0.$
Thus, we obtain in this case a known result, \cite{Jiang}: any closed
biharmonic curve on a Riemannian manifold with nonpositive sectional
curvature, is harmonic.

\subsection{2-dimensional Finsler spaces}

If $(M,F)\ $an oriented Finsler surface, the Frenet equations (\ref%
{Frenet_frame}) reduce to:%
\begin{equation*}
e_{1}=T,~\ D_{\partial _{s}}e_{1}=\kappa _{1}e_{2},~\ D_{\partial
_{s}}e_{2}=-\kappa _{1}e_{1}.
\end{equation*}%
and the equations of proper biharmonic curves become:%
\begin{equation*}
\left\{ 
\begin{array}{l}
\kappa _{1}\kappa _{1}^{\prime }=0; \\ 
\kappa _{1}^{\prime \prime }-\kappa _{1}^{3}=\left\langle -\kappa
_{1}R(e_{2})+\kappa _{1}^{2}P(e_{2},e_{2})-\kappa _{1}\kappa _{1}^{\prime
}C(e_{2},e_{2}),e_{2}\right\rangle ,%
\end{array}%
\right.
\end{equation*}%
that is, $\kappa _{1}\not=0$ is a constant and:%
\begin{equation}
\kappa _{1}^{2}=\left\langle R(e_{2})-\kappa
_{1}P(e_{2},e_{2}),e_{2}\right\rangle .  \label{biharmonic_curves_2_dim}
\end{equation}

Since the left hand side of the above equality is always positive, we have a
series of immediate consequences.

1)\ In \textit{Landsberg spaces}, (\ref{biharmonic_curves_2_dim})\ becomes
similar to the corresponding relation in Riemannian spaces, \cite{Montaldo}:%
\begin{equation*}
\kappa _{1}^{2}=\left\langle R(e_{2}),e_{2}\right\rangle =K(e_{2}).
\end{equation*}

It follows:

\begin{proposition}
In 2-dimensional Landsberg spaces with nonpositive (not necessarily
constant) flag curvature, any biharmonic curve is a geodesic.
\end{proposition}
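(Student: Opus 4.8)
The plan is to argue by the dichotomy already established for surfaces: a biharmonic curve is either a geodesic, in which case there is nothing to prove, or a proper biharmonic curve, and my goal is to rule out the second possibility under the stated curvature hypothesis. First I would invoke the reduced Frenet system for a Finsler surface together with Theorem \ref{geodesic_curvature}: these tell me that any proper biharmonic curve on a surface satisfies $\kappa _{1}=\mathrm{const}\neq 0$ and the scalar relation (\ref{biharmonic_curves_2_dim}), namely $\kappa _{1}^{2}=\left\langle R(e_{2})-\kappa _{1}P(e_{2},e_{2}),e_{2}\right\rangle$.

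Next I would use the defining property of a Landsberg space, $P=0$, to collapse the right-hand side to $\left\langle R(e_{2}),e_{2}\right\rangle$. Along a unit-speed curve one has $e_{1}=T=l^{h}(y)$ with $F=1$, and since $\{e_{1},e_{2}\}$ is orthonormal by the Frenet construction, the pair $\{l^{h}(y),e_{2}\}$ is precisely an orthonormal flag; hence the definition of flag curvature recalled in Section 2 gives $\left\langle R(e_{2}),e_{2}\right\rangle =F^{2}K(e_{2})=K(e_{2})$. The biharmonic relation therefore reduces, pointwise along $c$, to $\kappa _{1}^{2}=K(e_{2})$.

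The contradiction is then immediate: for a proper biharmonic curve the left-hand side $\kappa _{1}^{2}$ is strictly positive, whereas the nonpositive-flag-curvature hypothesis forces $K(e_{2})\leq 0$ at every point of $c$, the relevant flag being the one with flagpole $T$ and transverse vector $e_{2}$. This is impossible, so no proper biharmonic curve exists and every biharmonic curve is a geodesic.

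As for the main obstacle, there is essentially no analytic difficulty, since the heavy lifting---deriving the scalar biharmonic equation and proving constancy of $\kappa _{1}$---has already been carried out in the preceding subsections. The only point requiring genuine care is the bookkeeping in the second step: one must verify that the vanishing of $P$ is the correct specialization of the Landsberg condition and, above all, that $\left\langle R(e_{2}),e_{2}\right\rangle$ is legitimately identified with the flag curvature $K(e_{2})$ of the flag spanned by the velocity $T$ and the unit normal $e_{2}$, rather than some unnormalized expression. Checking the orthonormality of $\{T,e_{2}\}$ and the unit-speed normalization $F=1$ secures this identification and closes the argument.
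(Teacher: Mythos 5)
Your argument is correct and coincides with the paper's own proof: the author likewise specializes the surface relation (\ref{biharmonic_curves_2_dim}) to the Landsberg case $P=0$, identifies $\left\langle R(e_{2}),e_{2}\right\rangle$ with the flag curvature $K(e_{2})$, and concludes from the strict positivity of $\kappa _{1}^{2}$ for a proper biharmonic curve that no such curve can exist when $K\leq 0$. No further comment is needed.
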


2)\ In \textit{locally Minkowski spaces}, where $R=0,$ $P=0,$ (\ref%
{biharmonic_curves_2_dim})\ becomes:\ $\kappa _{1}=0.$ In other words:

\begin{proposition}
In 2-dimensional locally Minkowski spaces, any biharmonic curve is a
straight line.
\end{proposition}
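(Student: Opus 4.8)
The plan is to specialize the two-dimensional biharmonic equation (\ref{biharmonic_curves_2_dim}) to the locally Minkowski setting and to separate the geodesic and non-geodesic cases. I would let $c$ be an arbitrary biharmonic curve on a $2$-dimensional locally Minkowski space. If $c$ happens to be a geodesic, then I invoke the fact (Particular case 2 of Section 2) that geodesics of a locally Minkowski space are straight lines, and the claim holds directly. It therefore suffices to rule out proper biharmonic curves, so the substance of the argument is a short proof by contradiction.

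Assume, then, that $c$ is proper biharmonic. By Theorem \ref{geodesic_curvature}, or equivalently from the first line of (\ref{biharmonic_eqn_frenet}), its geodesic curvature $\kappa_1$ is a nonzero constant, and the reduced two-dimensional equation (\ref{biharmonic_curves_2_dim}) must hold:
\begin{equation*}
\kappa_1^2 = \left\langle R(e_2) - \kappa_1 P(e_2, e_2), e_2 \right\rangle.
\end{equation*}
The decisive input is the vanishing of the Jacobi endomorphism and the Landsberg tensor in a flat Finsler space, namely $R = 0$ and $P = 0$ (Particular case 2 of Section 2). Substituting these into the right-hand side collapses it to zero, forcing $\kappa_1^2 = 0$ and hence $\kappa_1 = 0$, which contradicts $\kappa_1 \neq 0$.

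Consequently no proper biharmonic curve exists on such a space, every biharmonic curve is a geodesic, and therefore a straight line, which is the assertion. I do not anticipate any genuine analytic obstacle here: the entire argument is driven by the flatness conditions $R = P = 0$, which annihilate the right-hand side of (\ref{biharmonic_curves_2_dim}). The only point requiring mild care is the case distinction, since (\ref{biharmonic_curves_2_dim}) was obtained under the standing hypothesis $\kappa_1 \neq 0$; geodesics must be handled separately, but they are trivially straight lines in the locally Minkowski setting, so this causes no difficulty.
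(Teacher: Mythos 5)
Your proof is correct and follows exactly the paper's route: specialize the two-dimensional equation (\ref{biharmonic_curves_2_dim}) to the locally Minkowski case, use $R=0$ and $P=0$ to force $\kappa_1=0$, and conclude via the fact that geodesics of a locally Minkowski space are straight lines. Your explicit handling of the geodesic/non-geodesic case distinction is a small but welcome tightening of the paper's one-line argument.
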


\subsection{Locally Minkowski spaces}

Assume now that $(M,F)$ is a locally Minkowski space with $\dim M\geq 3$.
Then, the biharmonic equation (\ref{bitension}) reduces to:%
\begin{equation}
D_{\partial _{s}}^{3}T+C(D_{\partial _{s}}T,D_{\partial _{s}}^{2}T)=0.
\label{biharmonic_Mink}
\end{equation}

Let us choose a local chart in which $g_{ij}=g_{ij}(y)$ and denote%
\begin{equation}
D_{\partial _{s}}T=:\tau ^{i}\delta _{i},  \label{tension_local}
\end{equation}%
the components of the tension of $c$ in the adapted basis; with this
notation, (\ref{biharmonic_Mink}) is read as: 
\begin{equation*}
\dfrac{d}{ds}\dfrac{D\tau ^{i}}{ds}+2C_{~hk}^{i}\tau ^{h}\dfrac{D\tau ^{k}}{%
ds}=0.
\end{equation*}%
Lowering indices by $g_{ij}$ and taking into account that $g_{ij\cdot
k}=2C_{ijk},$ $\dfrac{dg_{ij}}{ds}=2C_{ijk}\tau ^{k},$ this is:%
\begin{equation*}
\dfrac{d}{ds}(g_{ij}\dfrac{D\tau ^{i}}{ds})=\dfrac{D\tau _{h}}{ds}=0.
\end{equation*}%
Thus, we get the first integrals:%
\begin{equation}
\dfrac{D\tau _{h}}{ds}=\lambda _{h},~\ \ \ \lambda _{h}=const.
\label{first_integral}
\end{equation}

So far, we have seen that two classes of locally Minkowski spaces, namely:

- Euclidean spaces (\cite{Montaldo});

- 2-dimensional spaces,

do not to admit proper biharmonic curves. Using (\ref{first_integral}), we
will prove that such a result cannot be extended to arbitrary locally
Minkowskian Finsler spaces.

\begin{proposition}
There exist locally Minkowski spaces of dimension at least three, which
admit proper biharmonic curves.
\end{proposition}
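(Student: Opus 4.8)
Because the statement is an existence claim, the plan is to exhibit a concrete example rather than argue abstractly. I work in a locally Minkowski chart, where $G^{i}=0$ forces $\delta _{i}=\partial _{x^{i}}$, so that $T=\dot{x}$, the tension is $\tau ^{i}=\ddot{x}^{i}$, and — by the first integral (\ref{first_integral}) already derived — the biharmonic equation (\ref{biharmonic_Mink}) is equivalent to the condition that the covariant components of $D_{\partial _{s}}^{2}T$ be constant, say $\lambda _{h}$. Any proper example must have $\lambda \neq 0$: indeed if $D_{\partial _{s}}^{2}T\equiv 0$, then unit speed gives $0=\frac{d}{ds}\langle T,D_{\partial _{s}}T\rangle =\langle D_{\partial _{s}}T,D_{\partial _{s}}T\rangle +\langle T,D_{\partial _{s}}^{2}T\rangle $, whose second term vanishes, so $\kappa _{1}=0$ and $c$ is a geodesic. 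Moreover, the velocity $\dot{x}(s)$ cannot stay in a $2$-plane, or else the two-dimensional result forces a geodesic. Thus the velocity must genuinely move on the indicatrix of a non-Riemannian norm in dimension $\geq 3$.

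The idea is to make this motion equivariant under a symmetry. I choose a non-Riemannian Minkowski norm $F$ on $\mathbb{R}^{3}$ invariant under the rotations $\Phi _{s}=\exp (sA)$ of the $(y^{1},y^{2})$-plane (i.e. $F=F(\rho ,y^{3})$ with $\rho =\sqrt{(y^{1})^{2}+(y^{2})^{2}}$), and consider the curve with $\dot{x}(s)=\Phi _{s}v$, $F(v)=1$ — a circular helix $x(s)=(\frac{\rho _{0}}{\omega }\sin \omega s,-\frac{\rho _{0}}{\omega }\cos \omega s,v^{3}s)$ with axis along $x^{3}$. Unit speed is automatic since $\Phi _{s}$ is an isometry. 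Because isometries commute with the Cartan connection $D$, each $D_{\partial _{s}}^{k}T$ is $\Phi _{s}$-transported; in particular $D_{\partial _{s}}^{2}T(s)=\Phi _{s}\,D_{\partial _{s}}^{2}T(0)$, so its lowered covector satisfies $(D_{\partial _{s}}^{2}T)^{\flat }(s)=\exp (-sA^{\top })\,(D_{\partial _{s}}^{2}T)^{\flat }(0)$. Hence the curve is biharmonic precisely when this initial covector lies in $\ker A^{\top }$, which here means that its first two covariant components vanish.

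Taking $v=(\rho _{0},0,v^{3})$ in the fixed plane of the reflection $y^{2}\mapsto -y^{2}$ (an isometry, since $F$ depends on $y^{1},y^{2}$ only through $\rho $) kills the second covariant component automatically. Using $\ddot{x}(0)=Av$ and $\dddot{x}(0)=A^{2}v$, I compute $D_{\partial _{s}}^{2}T(0)=A^{2}v+C(v)(Av,Av)$, and the biharmonic condition reduces to the single scalar equation $\langle A^{2}v+C(v)(Av,Av),\partial _{x^{1}}\rangle =0$ in the one parameter $\rho _{0}$ (with $v^{3}$ fixed by $F=1$). This balances the centripetal term $A^{2}v=(-\omega ^{2}\rho _{0},0,0)$ against the Cartan term $C(v)(Av,Av)$. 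For a Riemannian norm $C=0$ and there is no root (recovering the Euclidean non-existence); for a genuinely non-Riemannian $F$ the Cartan term can cancel the centripetal one. Since $\rho _{0}\neq 0$ gives $\tau (0)=Av\neq 0$, the resulting helix is non-geodesic, i.e. proper biharmonic.

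The step I expect to be the real obstacle is quantitative: for an explicit rotationally symmetric non-Riemannian norm one must verify that the scalar equation actually has a root $\rho _{0}\neq 0$. Since the Cartan contribution has to reach the order of $\omega ^{2}\rho _{0}$, a small perturbation of the Euclidean norm will not suffice; one needs a strongly non-Riemannian $F$, for instance $F^{2}=\rho ^{2}+(y^{3})^{2}+\epsilon \,\rho ^{2}(y^{3})^{2}/(\rho ^{2}+(y^{3})^{2})$ for suitable $\epsilon $, after checking smoothness away from the origin and strong convexity. Exhibiting one admissible pair $(F,\rho _{0})$ — directly, or via an intermediate-value argument for $\langle D_{\partial _{s}}^{2}T(0),\partial _{x^{1}}\rangle $ as $\rho _{0}$ ranges over the indicatrix — then produces a locally Minkowski space of dimension three carrying a proper biharmonic (helical) curve, which is the assertion; higher dimensions follow by taking a product with Euclidean factors.
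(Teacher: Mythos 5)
Your overall strategy is sound in outline (exhibit a concrete locally Minkowski norm on $\mathbb{R}^{3}$ and a concrete non-geodesic solution of the first integral (\ref{first_integral})), and your preliminary reductions are correct: in an adapted chart $\tau ^{i}=\ddot{x}^{i}$, biharmonicity is equivalent to $\dfrac{D\tau _{h}}{ds}=\lambda _{h}=const.$, and $\lambda =0$ forces a geodesic by the unit-speed identity. The equivariant helix ansatz (velocity $\dot{x}(s)=\Phi _{s}v$ for a rotationally invariant norm, reduction to the single scalar condition that the first covariant component of $D_{\partial _{s}}^{2}T(0)=A^{2}v+C(v)(Av,Av)$ vanish, the second being killed by the reflection symmetry) is a genuinely different and rather elegant route from the paper's. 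But the proof has a real gap, and you name it yourself: the proposition is an \emph{existence} statement, and the existence is exactly what you leave unverified. You never produce an admissible pair $(F,\rho _{0})$: you observe that the Cartan term must be of the same order as the centripetal term $-\omega ^{2}\rho _{0}g_{11}(v)$, concede that a small perturbation of the Euclidean norm cannot achieve this, propose a one-parameter family $F^{2}=\rho ^{2}+(y^{3})^{2}+\epsilon \rho ^{2}(y^{3})^{2}/(\rho ^{2}+(y^{3})^{2})$ whose strong convexity for the required (necessarily non-small) $\epsilon $ is not checked, and end with ``exhibiting one admissible pair \dots then produces'' the example. Since $g_{11}(v)>0$ always while the sign and size of $C_{122}(v)$ depend delicately on the norm, an intermediate-value argument needs actual endpoint evaluations on an actual norm; without them the conclusion does not follow.

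By contrast, the paper's proof simply names the Randers--Minkowski norm $F(y)=\sqrt{(y^{1})^{2}+(y^{2})^{2}+(y^{3})^{2}}+by^{3}$, $b\in (0,1)$, and defers to the computation of Section 5.4.2, where the first-integral system (\ref{first_integral1}) is contracted with $y^{i}$, $\tau ^{i}$ and $\lambda ^{i}$ and reduced to the explicit scalar ODE (\ref{alpha}), $(\ln \alpha )^{\prime \prime }-\kappa _{1}^{2}/(2\alpha )+\kappa _{1}^{2}/2+\gamma \alpha =0$, which visibly admits non-constant solutions; $y^{3}$, then $y^{1},y^{2}$, are reconstructed from $\alpha $. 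Note also that the paper's resulting curves have non-constant $\alpha =1-by^{3}$, hence non-constant $y^{3}$, so they are not helices about the symmetry axis: your ansatz, even if completable, would be looking for a different (more special) family of solutions, and it is not a priori clear that this family is nonempty for the paper's norm. To repair your argument you would either have to carry out the quantitative verification for one explicit strongly convex rotationally symmetric norm, or abandon the helix ansatz and integrate the first-integral system directly as the paper does. A side remark: your claim that a biharmonic curve whose velocity stays in a $2$-plane is forced to be a geodesic by the two-dimensional result is not justified (the Cartan tensor and covariant derivatives of the ambient $3$-dimensional norm restricted to a plane do not coincide with those of the induced $2$-dimensional norm), but this claim is not load-bearing in your argument.
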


\begin{proof}
The space $\mathbb{R}^{3}$ equipped with the Randers-type Minkowski norm, 
\cite{Shen}, 
\begin{equation}
F(y):=\sqrt{(y^{1})^{2}+(y^{2})^{2}+(y^{3})^{2}}+by^{3}  \label{Mink_3d}
\end{equation}%
(with $b\in (0,1)$) admits proper biharmonic curves. The detailed
calculation is presented below, in Section 5.4.2.
\end{proof}

\subsection{Examples}

\subsubsection{A 2-dimensional Randers-Numata metric}

Let us consider the unit disk $M=\{x\in \mathbb{R}^{2}~|~\delta
_{ij}x^{i}x^{j}<1\}$, equipped with the Randers-type metric:%
\begin{equation}
F(x,y)=\sqrt{\delta _{ij}y^{i}y^{j}}+\delta _{ij}x^{i}y^{j}=:\alpha +\delta
_{ij}x^{i}y^{j};  \label{Numata}
\end{equation}%
and look for its proper biharmonic curves. For this metric, we know (\cite%
{Shen} p. 70) that: 
\begin{equation}
G^{i}=\dfrac{\alpha ^{2}}{2F}y^{i}  \label{Numata_G}
\end{equation}%
(note:\ our $G^{i}$ is as in \cite{Bucataru}, \cite{Lagrange}, i.e., half of
the one in \cite{Shen}) and its flag curvature is:%
\begin{equation}
K=\dfrac{3}{4F^{4}}(\delta _{ij}y^{i}y^{j})^{2}=\dfrac{3}{4F^{4}}\alpha ^{4}
\label{Numata_R}
\end{equation}%
We notice that the space is projectively flat, i.e., its geodesics are
straight lines.

The following equalities on $(M,F)$ can be checked by direct computation:%
\begin{eqnarray}
&&g_{ij}=FF_{\cdot ij}+F_{\cdot i}F_{\cdot j}=F\alpha _{\cdot ij}+F_{\cdot
i}F_{\cdot j}  \label{Numata0} \\
&&F_{\cdot ij}=\alpha _{\cdot ij},~~F_{\cdot ijk}=\alpha _{\cdot ijk};
\label{Numata1} \\
&&C_{ijk}=\dfrac{1}{4}F_{\cdot ijk}^{2}=\dfrac{1}{2}(F_{\cdot i}F_{\cdot
jk}+F_{\cdot j}F_{\cdot ik}+F_{\cdot k}F_{\cdot ij}+FF_{\cdot ijk});
\label{Numata2} \\
&&~(\alpha ^{2})_{\cdot ijk}=0\Rightarrow \alpha \alpha _{\cdot
ijk}=-(\alpha _{\cdot i}\alpha _{\cdot jk}+\alpha _{\cdot j}\alpha _{\cdot
ik}+\alpha _{\cdot k}\alpha _{\cdot ij}).  \label{Numata3}
\end{eqnarray}

Along a unit speed curve $c:I\rightarrow M,$ $s\mapsto (x^{i}(s))$ (more
precisely, along its lift $s\mapsto c^{\prime }(s):=(x^{i}(s),y^{i}(s):=\dot{%
x}^{i}(s))$ to $TM$) there hold the relations:%
\begin{eqnarray}
F(x,y) &=&\left\Vert T\right\Vert =1,\ ~\ \ F_{\cdot i}=\dfrac{g_{ij}y^{j}}{F%
};~\ \ ~\ F_{\cdot i}\tau ^{i}=\dfrac{1}{F}\left\langle T,D_{\partial
_{s}}T\right\rangle =0;  \label{Numata4} \\
\alpha _{\cdot i}y^{i} &=&\alpha ,~~\ \ \ \alpha _{\cdot ij}y^{j}=0~~\ \ \ \
\alpha _{\cdot i}\tau ^{i}=\alpha ^{\prime }+\dfrac{\alpha ^{3}}{F},
\label{Numata5}
\end{eqnarray}%
where $\alpha ^{\prime }:=d\alpha /ds=y^{i}\delta _{i}\alpha +\tau
^{i}\alpha _{\cdot i}$ and $\tau ^{i}$ are as in (\ref{tension_local}). The
tension can be expressed as:%
\begin{equation}
\tau ^{i}=\dfrac{dy^{i}}{ds}+\Gamma _{~jk}^{i}y^{j}y^{k}=\dfrac{dy^{i}}{ds}%
+G_{~j}^{i}y^{j}=\dfrac{dy^{i}}{ds}+2G^{i}=\dfrac{dy^{i}}{ds}+\dfrac{\alpha
^{2}}{F}y^{i}.  \label{Numata_tension}
\end{equation}

\bigskip

In the following, we will write the equations (\ref{biharmonic_curves_2_dim}%
) of proper biharmonic curves as:%
\begin{equation}
\left\{ 
\begin{array}{l}
g_{ij}\tau ^{i}\tau ^{j}=\kappa _{1}^{2}(=const.\not=0); \\ 
-\kappa _{1}^{4}=\left\langle -R(D_{\partial _{s}}T),D_{\partial
_{s}}T\right\rangle +\left\langle P(D_{\partial _{s}}T,D_{\partial
_{s}}T),D_{\partial _{s}}T\right\rangle .%
\end{array}%
\right.  \label{2-dim_biharmonic}
\end{equation}

Let us examine the first equation (\ref{2-dim_biharmonic}). Using (\ref%
{Numata0}), (\ref{Numata4}), (\ref{Numata_tension}) and (\ref{Numata5}),
this condition becomes:%
\begin{equation}
\kappa _{1}^{2}=F\alpha _{\cdot ij}\tau ^{i}\tau ^{j}=F\alpha _{\cdot ij}%
\dfrac{dy^{i}}{ds}\dfrac{dy^{j}}{ds}.  \label{Numata_k}
\end{equation}%
We have: $\alpha _{\cdot 11}=\dfrac{(y^{2})^{2}}{\alpha ^{3}},$ $\alpha
_{\cdot 12}=\dfrac{-y^{1}y^{2}}{\alpha ^{3}},~\alpha _{\cdot 22}=\dfrac{%
(y^{1})^{2}}{\alpha ^{3}}$ and thus,%
\begin{equation*}
\kappa _{1}^{2}=\dfrac{F(\dot{y}^{2}y^{1}-\dot{y}^{1}y^{2})^{2}}{\alpha ^{3}}%
=F\alpha (\dfrac{d}{ds}(\arctan \dfrac{y^{2}}{y^{1}}))^{2}.
\end{equation*}%
Passing to (Euclidean)\ polar coordinates $y^{1}=\alpha \cos \theta ,$ $%
y^{2}=\alpha \sin \theta $ in the plane $(y^{1},y^{2}),$ we have $\theta
=\arctan \dfrac{y^{2}}{y^{1}}+const.$ and the first equation (\ref%
{2-dim_biharmonic})\ can be finally written as:%
\begin{equation}
\theta =\pm ~\kappa _{1}\int (F\alpha )^{-1/2}ds.  \label{theta}
\end{equation}

\bigskip

Consider now the second equation (\ref{2-dim_biharmonic}); the first term in
its right hand side is:%
\begin{equation}
\left\langle -R(D_{\partial _{s}}T),D_{\partial _{s}}T\right\rangle =-\kappa
_{1}^{2}\left\langle R(e_{2}),e_{2}\right\rangle =-\kappa _{1}^{2}F^{2}K=%
\dfrac{-3\kappa _{1}^{2}}{4F^{2}}\alpha ^{4}.  \label{Numata_term1}
\end{equation}

In order to calculate the second term $\left\langle P(D_{\partial
_{s}}T,D_{\partial _{s}}T),D_{\partial _{s}}T\right\rangle =P_{ijk}\tau
^{i}\tau ^{j}\tau ^{k},$ we make use of (\ref{local_expr_P}):%
\begin{equation*}
P_{ijk}=y^{l}C_{ijk|l}=y^{l}(C_{ijk,l}-G_{~l}^{h}C_{ijk\cdot h}-\Gamma
_{~il}^{h}C_{hjk}-\Gamma _{~jl}^{h}C_{ihk}-\Gamma _{~kl}^{h}C_{ijh}).
\end{equation*}

First of all, from:$\ C_{~ijk,l}=\dfrac{1}{4}(F_{~,l}^{2})_{\cdot ijk}=%
\dfrac{1}{2}(FF_{,l})_{\cdot ijk}=\dfrac{1}{4}(F\alpha _{\cdot
l}^{2})_{\cdot ijk},$ it follows, with the help of (\ref{Numata1}) and (\ref%
{Numata3}), that $y^{l}C_{ijk,l}=0.$

Then, taking into account that $y^{l}\Gamma _{~il}^{h}=G_{~\cdot i}^{h},$ $%
y^{l}G_{~l}^{h}=2G^{h},$ the expression (\ref{Numata_G}) of $G^{i}$ and of
the fact that $C_{ijk}$ is homogeneous of degree -1 in $y,$ we obtain:%
\begin{equation}
P_{ijk}=-\dfrac{\alpha ^{2}}{2F}C_{ijk}.  \label{P}
\end{equation}

Substituting (\ref{Numata2}) in (\ref{P}), contracting with $\tau ^{i}\tau
^{j}\tau ^{k}$ and then using (\ref{Numata4}), we find: $P_{ijk}\tau
^{i}\tau ^{j}\tau ^{k}=-\dfrac{\alpha ^{2}}{4}F_{\cdot ijk}\tau ^{i}\tau
^{j}\tau ^{k}=-\dfrac{\alpha ^{2}}{4}\alpha _{\cdot ijk}\tau ^{i}\tau
^{j}\tau ^{k}.$ Finally, by (\ref{Numata3}), (\ref{Numata5}) and (\ref%
{Numata_k}), we find: 
\begin{equation*}
\left\langle P(D_{\partial _{s}}T,D_{\partial _{s}}T),D_{\partial
_{s}}T\right\rangle =\dfrac{3\kappa _{1}^{2}}{4}(\dfrac{\alpha \alpha
^{\prime }}{F}+\dfrac{\alpha ^{4}}{F^{2}}).
\end{equation*}%
Using the latter and (\ref{Numata_term1}), together with $F=1$ in the second
equation (\ref{2-dim_biharmonic}), this becomes:%
\begin{equation*}
\dfrac{3\alpha \alpha ^{\prime }}{4}=-\kappa _{1}^{2},
\end{equation*}%
with the solution:%
\begin{equation*}
\alpha =(\mu s+\nu )^{1/2},
\end{equation*}%
where $\mu :=-\dfrac{8\kappa _{1}^{2}}{3}$ and $\nu \in \mathbb{R}$ are
constants. Once we have calculated $\alpha ,$ we are able to calculate $%
\theta $ (from (\ref{theta})), as:%
\begin{equation*}
\theta =\pm ~\kappa _{1}\int \alpha ^{-1/2}ds=\pm \dfrac{4\kappa _{1}}{3\mu }%
\left( \mu s+\nu \right) ^{3/4}+\gamma ,
\end{equation*}%
where $\gamma \in \mathbb{R}.$ Substituting into $y^{1}=\alpha \cos \theta ,$
$y^{2}=\alpha \sin \theta $ and integrating with respect to $s,$ we can
finally state:

\begin{proposition}
Proper biharmonic curves of $(M,F)$ are given, in the Finslerian natural
parametrization, by:
\end{proposition}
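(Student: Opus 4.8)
Everything needed to write the curve down is, at this point, already in hand: the preceding computation determines $\alpha=\alpha(s)=(\mu s+\nu)^{1/2}$ and the angular function $\theta=\theta(s)$ in closed form, together with the polar relations $\dot x^1=y^1=\alpha\cos\theta$ and $\dot x^2=y^2=\alpha\sin\theta$. The statement is therefore obtained by one further quadrature: the plan is to integrate these two velocity components with respect to $s$ so as to recover the base point $x^i(s)$. The whole issue is to evaluate $\int\alpha\cos\theta\,ds$ and $\int\alpha\sin\theta\,ds$ in closed form.

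First I would trade the variable $s$ for $\theta$. Differentiating (\ref{theta}) and using $F=1$ along $c$ gives $d\theta/ds=\pm\kappa_1\alpha^{-1/2}$, hence $ds=\pm\kappa_1^{-1}\alpha^{1/2}\,d\theta$, so that $dx^1=\pm\kappa_1^{-1}\alpha^{3/2}\cos\theta\,d\theta$ and likewise for $dx^2$ with $\sin\theta$. The key point that makes these integrals elementary is that $\alpha^{3/2}$ is an affine function of $\theta$: from $\alpha^{3/2}=(\mu s+\nu)^{3/4}$ and the explicit form $\theta-\gamma=\pm\tfrac{4\kappa_1}{3\mu}(\mu s+\nu)^{3/4}$ one reads off $\alpha^{3/2}=\pm\tfrac{3\mu}{4\kappa_1}(\theta-\gamma)$. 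Feeding this back, the two sign choices (both being the single orientation sign of (\ref{theta}), so that their product is $+1$) combine with $\mu=-\tfrac{8\kappa_1^2}{3}$ to collapse the whole prefactor to the constant $-2$, leaving $dx^1=-2(\theta-\gamma)\cos\theta\,d\theta$ and $dx^2=-2(\theta-\gamma)\sin\theta\,d\theta$.

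A single integration by parts in each then finishes the computation, yielding
\begin{equation*}
x^1=-2\bigl[(\theta-\gamma)\sin\theta+\cos\theta\bigr]+C_1,\qquad x^2=2\bigl[(\theta-\gamma)\cos\theta-\sin\theta\bigr]+C_2,
\end{equation*}
after which one re-inserts $\theta=\theta(s)$ to display the curve in the Finslerian arc-length parameter. The routine part is the sign and constant bookkeeping for $C_1,C_2,\gamma,\nu$, where I expect no real difficulty. The one genuinely delicate point, if the statement is to be honest about the domain, is the range of the parameter: since $\mu<0$ the factor $\alpha=(\mu s+\nu)^{1/2}$ is real only for $\mu s+\nu\ge 0$, and on that interval one should still check that the resulting curve remains inside the unit disk $M$ before calling it a biharmonic curve of $(M,F)$.
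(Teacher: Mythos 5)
Your proposal is correct and follows the same route as the paper: both take the already-derived $\alpha=(\mu s+\nu)^{1/2}$ and $\theta=\pm\tfrac{4\kappa_1}{3\mu}(\mu s+\nu)^{3/4}+\gamma$ and obtain the curve by integrating $\dot x^1=\alpha\cos\theta$, $\dot x^2=\alpha\sin\theta$. The difference is that the paper stops at the unevaluated quadratures $x^i=\int\alpha\cos\theta\,ds$, $\int\alpha\sin\theta\,ds$, whereas you push through to closed form. Your computation checks out: with $F=1$ one has $ds=\pm\kappa_1^{-1}\alpha^{1/2}\,d\theta$, the relation $\alpha^{3/2}=\pm\tfrac{3\mu}{4\kappa_1}(\theta-\gamma)$ is exactly the inversion of (\ref{theta}), the two orientation signs cancel, and $\tfrac{3\mu}{4\kappa_1^2}=-2$ by the definition of $\mu$; the integrations by parts then give $x^1=-2[(\theta-\gamma)\sin\theta+\cos\theta]+C_1$ and $x^2=2[(\theta-\gamma)\cos\theta-\sin\theta]+C_2$, which is a genuinely sharper statement than the one in the paper (the curve is an involute-of-a-circle--type spiral in the $\theta$-parameter). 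Your caveat about the domain is also well taken and can in fact be strengthened: since $|x|<1$ on the unit disk, $F=\alpha+\langle x,y\rangle\le\alpha(1+|x|)<2\alpha$, so $F=1$ forces $\alpha>\tfrac12$, i.e.\ $\mu s+\nu>\tfrac14$; together with $\mu<0$ this confines the arc-length parameter to a bounded interval, a restriction the paper does not mention.
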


\begin{equation*}
\left\{ 
\begin{array}{c}
x^{1}=\int \alpha \cos \theta ds=\int (\mu s+\nu )^{1/2}\cos (\dfrac{4\kappa
_{1}}{3\mu }\left( \mu s+\nu \right) ^{3/4}+\gamma )ds \\ 
x^{2}=\int \alpha \sin \theta ds=\pm \int (\mu s+\nu )^{1/2}\sin (\dfrac{%
4\kappa _{1}}{3\mu }\left( \mu s+\nu \right) ^{3/4}+\gamma )ds.%
\end{array}%
\right. .
\end{equation*}

\subsubsection{A Randers-Minkowski norm on $\mathbb{R}^{3}$}

Let us find proper biharmonic curves of the Minkowski norm:%
\begin{equation}
F(y)=\sqrt{(y^{1})^{2}+(y^{2})^{2}+(y^{3})^{2}}+by^{3}=:\alpha +\beta
\end{equation}%
on $M=\mathbb{R}^{3}$. That is, we will look for non-constant $y=\dot{x}$
obeying the biharmonic equation and $F(y)=1$.

Let us denote: $\beta =b_{i}y^{i},$ $b_{i}:=b\delta _{3i}.$ The metric
tensor $g_{ij}$ is given, \cite{Shen}, by:%
\begin{equation}
g_{ij}=\dfrac{F}{\alpha }(\delta _{ij}-\alpha _{\cdot i}\alpha _{\cdot
j})+F_{\cdot i}F_{\cdot j}.  \label{metrics}
\end{equation}%
Also, there hold relations (\ref{Numata0})-(\ref{Numata3}) and, along any
unit speed curve $c$ on $(M,F)$, we have:%
\begin{eqnarray}
&&\tau ^{i}=\dfrac{dy^{i}}{ds},~\ \ F_{\cdot i}\tau ^{i}=F^{\prime
}=0,~\alpha _{\cdot i}\tau ^{i}=\alpha ^{\prime };  \label{RMink1} \\
&&\alpha _{\cdot ij}\tau ^{i}\tau ^{j}=F_{\cdot ij}\tau ^{i}\tau ^{j}=\dfrac{%
\kappa _{1}^{2}}{F},~\ \ F_{\cdot j}\dfrac{d\tau ^{j}}{ds}=-\dfrac{\kappa
_{1}^{2}}{F},  \label{RMink2} \\
&&\ \ \alpha _{\cdot ij}\tau ^{j}=\dfrac{1}{\alpha }(\delta _{ij}\tau
^{j}-\alpha _{i}\alpha ^{\prime }),~\ \alpha _{\cdot j}\dfrac{d\tau ^{j}}{ds}%
=\alpha ^{\prime \prime }-\dfrac{\kappa _{1}^{2}}{F};  \label{RMink3}
\end{eqnarray}%
We remark that here, $\kappa _{1}^{2}:=g_{ij}\tau ^{i}\tau ^{j}$ is not
necessarily constant.

\bigskip

Biharmonic curves are given by (\ref{first_integral}), i.e.:%
\begin{equation}
g_{ij}\dfrac{D\tau ^{j}}{ds}=\lambda _{i},~\ \lambda _{i}=const.
\label{first_integral1}
\end{equation}%
\ In the following, we will express these equations in terms of the
Euclidean metric $\delta _{ij}$ and of the derivatives of $\alpha .$

By (\ref{metrics}), (\ref{RMink2}), (\ref{RMink3}), we have:%
\begin{equation}
g_{ij}\dfrac{d\tau ^{j}}{ds}=\dfrac{F}{\alpha }\delta _{ij}\dfrac{d\tau ^{j}%
}{ds}+(\dfrac{\kappa _{1}^{2}}{\alpha }-\dfrac{F\alpha ^{\prime \prime }}{%
\alpha })\alpha _{\cdot i}-\dfrac{\kappa _{1}^{2}}{F}F_{\cdot i}.
\label{term1}
\end{equation}

For the second term in (\ref{first_integral1}), we get, with the help of (%
\ref{Numata1}), (\ref{Numata2}), (\ref{RMink1}), (\ref{RMink2}):%
\begin{equation*}
C_{ijk}\tau ^{j}\tau ^{k}=\dfrac{1}{2}(\dfrac{\kappa _{1}^{2}}{F}F_{\cdot
i}+F\alpha _{\cdot ijk}\tau ^{j}\tau ^{k})
\end{equation*}%
and further, using (\ref{Numata3}), (\ref{RMink3}):%
\begin{equation}
C_{ijk}\tau ^{j}\tau ^{k}=\dfrac{\kappa _{1}^{2}}{2F}F_{\cdot i}+(\dfrac{%
F(\alpha ^{\prime })^{2}}{\alpha ^{2}}-\dfrac{\kappa _{1}^{2}}{2\alpha }%
)\alpha _{\cdot i}-\dfrac{F\alpha ^{\prime }}{\alpha ^{2}}\delta _{ij}\tau
^{j}.  \label{term2}
\end{equation}

Adding (\ref{term1}) and (\ref{term2}), equations (\ref{first_integral})
become:%
\begin{equation}
\dfrac{F}{\alpha }\delta _{ij}\dfrac{d\tau ^{j}}{ds}-\dfrac{F\alpha ^{\prime
}}{\alpha ^{2}}\delta _{ij}\tau ^{j}+\left( \dfrac{F(\alpha ^{\prime })^{2}}{%
\alpha ^{2}}-\dfrac{F\alpha ^{\prime \prime }}{\alpha }+\dfrac{\kappa
_{1}^{2}}{2\alpha }-\dfrac{\kappa _{1}^{2}}{2F})\right) \alpha _{\cdot
i}=\lambda _{i}+\dfrac{\kappa _{1}^{2}}{2F}b_{i}.  \label{biharmonic_RM}
\end{equation}

Let us notice some more equalities, valid along any unit speed curve:%
\begin{eqnarray}
&&\alpha _{\cdot i}y^{i}=\alpha ,~\ \ \delta _{ij}y^{i}\tau ^{j}=\dfrac{1}{2}%
\dfrac{d\alpha ^{2}}{ds}=\alpha \alpha ^{\prime },~~\ \delta _{ij}\tau
^{i}\tau ^{j}=\dfrac{\alpha \kappa _{1}^{2}}{F}+(\alpha ^{\prime })^{2},\ 
\label{(*)} \\
&&\ \ \delta _{ij}y^{i}\dfrac{d\tau ^{j}}{ds}=\alpha \alpha ^{\prime \prime
}-\dfrac{\kappa _{1}^{2}\alpha }{F},~\delta _{ij}\tau ^{i}\dfrac{d\tau ^{j}}{%
ds}=\dfrac{\alpha ^{\prime }}{2F}\kappa _{1}^{2}+\dfrac{\alpha \kappa
_{1}\kappa _{1}^{\prime }}{F}+\alpha ^{\prime }\alpha ^{\prime \prime },
\label{(**)}
\end{eqnarray}%
(where, in the third relation (\ref{(*)}), we have used (\ref{metrics})).
Contracting these three equations (\ref{biharmonic_RM}) in turn with $%
y^{i},\tau ^{i}$ and $\delta ^{ik}\lambda _{k}$ and using (\ref{(*)}), (\ref%
{(**)}) together with $\alpha _{\cdot i}\tau ^{i}=\alpha ^{\prime }$, they
take the form:%
\begin{equation}
\left\{ 
\begin{array}{l}
\lambda _{i}y^{i}=-\kappa _{1}^{2} \\ 
\lambda _{i}\tau ^{i}=\kappa _{1}\kappa _{1}^{\prime } \\ 
\dfrac{F}{\alpha }\lambda _{j}\dfrac{d\tau ^{j}}{ds}-\dfrac{F\alpha ^{\prime
}}{\alpha ^{2}}\lambda _{j}\tau ^{j}+\left( \dfrac{F(\alpha ^{\prime })^{2}}{%
\alpha ^{2}}-\dfrac{F\alpha ^{\prime \prime }}{\alpha }+\dfrac{\kappa
_{1}^{2}}{2\alpha }-\dfrac{\kappa _{1}^{2}}{2F})\right) \delta ^{ij}\alpha
_{\cdot i}\lambda _{j}=-\kappa _{1}^{2}\gamma ,%
\end{array}%
\right.  \label{y}
\end{equation}%
where $\gamma :=-\kappa _{1}^{-2}\delta ^{ij}(\lambda _{i}+\dfrac{\kappa
_{1}^{2}}{2F}b_{i})\lambda _{j}$ is a constant.

Differentiating the first equation (\ref{y}) by $s,$ we have: $\lambda
_{i}\tau ^{i}=-2\kappa _{1}\kappa _{1}^{\prime },$ which, substituted into
the second one yields:%
\begin{equation*}
\kappa _{1}=const.
\end{equation*}%
(which was found in Section 5.1. to be true along any biharmonic curve).
Thus, we have: $\lambda _{i}\tau ^{i}=0\ \Rightarrow \lambda _{i}\dfrac{%
d\tau ^{i}}{ds}=0.$ Further, noticing that $\delta ^{ij}\alpha _{\cdot i}=%
\dfrac{y^{j}}{\alpha },$ the third equation (\ref{y}) becomes:%
\begin{equation}
\dfrac{1}{\alpha }(\dfrac{F(\alpha ^{\prime })^{2}}{\alpha ^{2}}-\dfrac{%
F\alpha ^{\prime \prime }}{\alpha }+\dfrac{\kappa _{1}^{2}}{2\alpha }-\dfrac{%
\kappa _{1}^{2}}{2F})=\gamma ;  \label{lambda}
\end{equation}%
taking into account that $F\equiv 1$ along $c,$ this is:%
\begin{equation}
(\ln \alpha )^{\prime \prime }-\dfrac{\kappa _{1}^{2}}{2\alpha }+\dfrac{%
\kappa _{1}^{2}}{2}+\gamma \alpha =0,  \label{alpha}
\end{equation}%
which obviously admits non-constant solutions.

Let $\alpha $ be a non-constant solution of (\ref{alpha}). Then,
substituting $\alpha =1-by^{3},$ we get a non-constant value for $y^{3}=\dot{%
x}^{3}.$ Further, $y^{1}=\dot{x}^{1}$ and $y^{2}=\dot{x}^{2}$ can be
obtained from the relations $F(y)=1,$ $\lambda _{i}y^{i}=-\kappa _{1}^{2}.$

\section{Conclusions}

In the present paper, we extend the notion of bienergy functional to curves
on Finsler spaces and deduce the equations of biharmonic curves as the
Euler-Lagrange equations attached to this functional; these equations turn
out to involve, besides the flag curvature (which is the analogue of the
curvature term in the Riemannian biharmonic equations), two specific,
non-Riemannian quantities, namely, the Cartan tensor and the Landsberg
tensor of the space. Still, just as in Riemannian spaces, all geodesics are
biharmonic curves, but the converse is generally not true.

A brief analysis in the Frenet frame shows that, though the equation of
biharmonic curves becomes more complicated compared to its Riemannian
counterpart, a basic result in Riemannian biharmonicity theory remains true
in our case, namely, any Finslerian biharmonic curve has constant geodesic
curvature.

Further, we focus on the conditions under which the implication \textit{%
biharmonic }$\rightarrow $ \textit{geodesic} becomes true. The first case to
be investigated is the one of smooth closed curves. In the particular case
of Riemannian spaces, this turns out to be the known condition of
nonpositive sectional curvature; still, in the general Finslerian case, this
condition is more complicated, involving the Cartan and Landsberg tensors.

A special attention is paid to two-dimensional Finsler spaces (Section 5.2).
We prove that, in dimension two, in: 1)\ Landsberg spaces with nonpositive
flag curvature and 2)\ flat (i.e., locally Minkowski)\ Finsler spaces, any
biharmonic curve is a geodesic.

Further, in Subsection 5.3., we investigate higher dimensional locally
Minkowski spaces. Here, we find a first integral of the biharmonic equation
and show that, unlike Euclidean spaces, flat Finsler spaces of dimension at
least three can admit proper unit speed biharmonic curves.

Two examples: a 2-dimensional projectively flat space and a 3-dimensional
locally Minkowski Finsler space, both admitting proper biharmonic curves,
are given in Section 5.4.

\textbf{Acknowledgment.} The work was supported by the Sectorial Operational
Program Human Resources Development (SOP HRD), financed from the European
Social Fund and by Romanian Government under the Project number
POSDRU/89/1.5/S/59323.

\end{document}